\newtheorem{rem}{Remark}
\newtheorem{prop}{Proposition}
\newtheorem{lem}{Lemma}
\newcommand{\E}{\mathcal{E}}
\newcommand{\T}{\mathcal{T}}
\newcommand{\R}{\mathbb{R}}
\newcommand{\wt}{\widetilde}
\newcommand{\half}{\frac{1}{2}}
\begin{document}

\title[Analysis and entropy stability of Line-DG]
  {Analysis and entropy stability of the line-based discontinuous Galerkin
       method}
\author{Will Pazner}
\address{Center for Applied Scientific Computing, Lawrence Livermore National
        Laboratory}
\author{Per-Olof Persson}
\address{Department of Mathematics, University of California, Berkeley}

\maketitle

\begin{abstract}
We develop a discretely entropy-stable line-based discontinuous Galerkin method
for hyperbolic conservation laws based on a flux differencing technique. By
using standard entropy-stable and entropy-conservative numerical flux functions,
this method guarantees that the discrete integral of the entropy is
non-increasing. This nonlinear entropy stability property is important for the
robustness of the method, in particular when applied to problems with
discontinuous solutions or when the mesh is under-resolved. This line-based
method is significantly less computationally expensive than a standard DG
method. Numerical results are shown demonstrating the effectiveness of the
method on a variety of test cases, including Burgers' equation and the Euler
equations, in one, two, and three spatial dimensions.
\end{abstract}

\section{Introduction}

High-order numerical methods for the solution of partial differential equations
have seen success in a wide range of application areas \cite{Wang2013}. In
particular, the discontinuous Galerkin (DG) method \cite{Cockburn1991,Reed1973},
an arbitrary-order finite element method suitable for use on unstructured
geometries, possesses many desirable properties, making it well-suited for a
large number of applications. Variants of the DG method, such as the
discontinuous Galerkin spectral element method (DG-SEM)
\cite{Black2000,Rasetarinera2001,Kopriva1996} and the line-based discontinuous
Galerkin method (Line-DG) \cite{Persson2012,Persson2013} have been introduced in
order to retain the attractive properties of the DG method while reducing its
computational cost.

Of particular interest are the stability properties of these methods. It has
been shown that the standard discontinuous Galerkin method satisfies a cell
entropy inequality in the scalar and symmetric system case, leading to $L^2$
stability \cite{Jiang1994,Hou2006}. However, these results do not immediately
translate to general systems of conservation laws. Additionally, these results
rely on the use of exactly integrated DG methods, which may be impractical or
prohibitively expensive due to nonlinearities in the fluxes. To maintain
stability in the general nonlinear case, a variety of methods have been
proposed, including limiters \cite{Zhang2010b,Cockburn2001} and artificial
viscosity \cite{Persson2006,Zingan2013}, however these methods can result in
reduced order of accuracy, and often require parameter tuning. Recently,
discretely entropy-stable DG and DG-SEM methods have been developed based on a
technique known as flux differencing
\cite{Chan2018,Chen2017,Carpenter2014,Fisher2013,Gassner2016,Gassner2013,%
Parsani2016}. These methods are based on the entropy-conservative and
entropy-stable fluxes developed by Tadmor
\cite{Tadmor1987,Tadmor2003,Osher1988}, which have been used in the context of
finite volume methods.

In this work, we extend the flux differencing methodology to the line-based DG
(Line-DG) methods developed in \cite{Persson2012,Persson2013}. These methods are
closely related both to standard DG methods and to DG-SEM methods, and are based
on solving a sequence of one-dimensional Galerkin problems along lines of nodes
within a tensor-product element. We modify the Line-DG method by introducing
entropy-stable and entropy-conservative flux functions, combined with
appropriate projection operators required to ensure discrete entropy stability.
Then we show that this modified method satisfies an entropy inequality
consistent with the quadrature chosen for the scheme. This discrete entropy
stability property is shown to be important for the robustness of the scheme.
For instance, for Burgers' equation, entropy stability implies $L^2$ stability.
For the Euler equations, we additionally must require that the density and
pressure remain positive. We remark that although the method remains stable, the
numerical solution may still develop strong oscillations in the vicinity of a
discontinuity, suggesting the utility of other shock-capturing techniques for
problems with strong shocks. However, this increased robustness could prove to
be particularly important for under-resolved turbulent flows, for which standard
methods have been observed to be unstable \cite{Winters2018,Moura2017}. The
structure of the paper is as follows. In Section \ref{sec:eqns-discretization}
we describe the governing equations and define the Line-DG method. We then
modify the standard Line-DG scheme to achieve discrete entropy stability, and
analyze the accuracy of the resulting method. In Section
\ref{sec:implementation} we discuss implementation details and computational
efficiency. In Section \ref{sec:results} we provide a range of numerical
experiments, including Burgers' equation and the Euler equations of gas
dynamics, in one, two, and three spatial dimensions. We end with concluding
remarks in Section \ref{sec:conclusion}.

\section{Discretization and equations} \label{sec:eqns-discretization}

\subsection{Governing equations and entropy analysis}

We consider a system of hyperbolic conservation laws in $d$ dimensions in a
spatial domain $\Omega \subseteq \R^d$,
\begin{equation} \label{eq:cons-law}
  \frac{\partial \bm u}{\partial t} + \nabla \cdot \bm f = 0.
\end{equation}
The number of solution components is denoted $n_c$, and so the solution $\bm u$
is a function $\bm u (\bm x, t) : \R^d \times \R \to \R^{n_c}$, and the flux
function is written $\bm f(\bm u) : \R^{n_c} \to \R^{n_c \times d}$.

A convex function $U (\bm u) : \R^{n_c} \to \R$ is called an \textit{entropy
function} if there exist \textit{entropy fluxes} $F_j : \R^{n_c} \to \R$, $1
\leq j \leq d$, such that
\begin{equation}
  U'(\bm u) \bm f'_j (\bm u) = F'_j (\bm u),
\end{equation}
where the derivatives are taken with respect to the state variables $\bm u$. In
regions where $\bm u$ is smooth, the entropy satisfies the related equation
\begin{equation} \label{eq:entropy-eq}
  \frac{\partial U}{\partial t} + \nabla \cdot \bm F = 0.
\end{equation}
However, hyperbolic conservation laws admit solutions with discontinuities, for
which the physically-relevant solutions must dissipate entropy. Thus, the
\textit{entropy solution} $\bm u$ to equation \eqref{eq:cons-law} satisfies
\begin{equation} \label{eq:entropy-inequality}
  \frac{\partial U}{\partial t} + \nabla \cdot \bm F \leq 0
\end{equation}
for all entropy functions $U$. Assuming periodic or compactly supported boundary
conditions, and integrating \eqref{eq:entropy-inequality} over $\Omega$, we
obtain
\begin{equation} \label{eq:entropy-decreasing}
  \frac{d}{dt} \int_\Omega U \, d\bm x \leq 0,
\end{equation}
and thus conclude that the total entropy is monotonically non-increasing in
time. The inequality \eqref{eq:entropy-decreasing} can be seen as a non-linear
analogue to standard $L^2$ stability. Furthermore, if the entropy function $U$
is uniformly convex, then entropy stability can be used to guarantee $L^2$
stability, thus motivating the development of numerical schemes that satisfy a
discrete entropy stability property.

\subsubsection{Entropy variables}
We define the \textit{entropy variables} by
\begin{equation}
  \bm v = U'(\bm u).
\end{equation}
If $U$ is uniformly convex, then the mapping $\bm u \mapsto \bm v$ is
invertible, and is considered as a change of variables. Defining $\bm g(\bm v)
= \bm f(\bm u(\bm v))$, we obtain a system of hyperbolic conservation laws
equivalent to \eqref{eq:cons-law},
\begin{equation}
  \bm u'(\bm v) \frac{\partial\bm v}{\partial t} + \nabla \cdot \bm g = 0.
\end{equation}
Convexity of the entropy function $U$ implies symmetry of $\bm g'(\bm v)$, and
so there exist functions $\psi_j(\bm v)$, called \textit{flux potentials},
satisfying
\begin{equation}
  \psi_j'(\bm v) = \bm g_j(\bm v).
\end{equation}
One can verify that $\psi_j$ is given by
\begin{equation}
  \psi_j = \bm g_j(\bm v)^T \bm v - F_j(\bm u(\bm v)).
\end{equation}

\subsubsection{Discrete entropy stability and two-point numerical fluxes}

Given that the governing conservation law \eqref{eq:cons-law} satisfies the
entropy inequality \eqref{eq:entropy-decreasing}, it is desirable for the
numerical method to satisfy a corresponding discrete analogue, which can then be
used to attain nonlinear stability for the numerical scheme. In the context
of finite volume methods, Tadmor demonstrated how this can be achieved through
the judicious choice of numerical flux functions \cite{Tadmor1987}. Writing a
finite volume method for
\eqref{eq:cons-law} as
\begin{equation} \label{eq:fv-scheme}
   \frac{d \bm u_i}{dt} = -\frac{1}{\Delta x}
      \left( \bm f_{i+1/2} - \bm f_{i-1/2} \right),
\end{equation}
and multiplying on the left by the entropy variables $\bm v_i^T$, we obtain
\begin{equation} \label{eq:fv-entropy}
   \frac{d U_i}{dt} = - \frac{1}{\Delta x}
      \bm v_i^T \left( \bm f_{i+1/2} - \bm f_{i-1/2} \right).
\end{equation}
Thus, the scheme \eqref{eq:fv-scheme} can be seen to be entropy conservative if
the above equation can be rewritten as
\begin{equation} \label{eq:fv-entropy-cons}
   \frac{d U_i}{dt} = - \frac{1}{\Delta x}
   \left( F_{i+1/2} - F_{i-1/2} \right),
\end{equation}
which is a finite volume discretization of the entropy equation
\eqref{eq:entropy-eq}. Tadmor demonstrated that if $\bm f_{i+1/2}$
satisfies the so-called \textit{shuffle condition},
\begin{equation}
   \left( \bm v_{i+1} - \bm v_i \right)^T \bm f_{i+1/2} = \psi_{i+1} - \psi_i,
\end{equation}
then \eqref{eq:fv-entropy-cons} is satisfied and \eqref{eq:fv-scheme} is entropy
conservative with
\begin{equation}
   F_{i+1/2} = \frac{1}{2} \left( \bm v_{i+1} + \bm v_{i} \right)^T\bm f_{i+1/2}
      - \frac{1}{2}\left(\psi_{i+1} + \psi_i \right).
\end{equation}
The addition of dissipative terms will then lead to an entropy stable scheme.
In the work of Fisher, it was shown how to extend this construction to the
context of spectral element methods through a flux differencing technique
\cite{Carpenter2014,Fisher2013}.

Working within a similar framework, we introduce the notation required to
obtain discrete entropy stability for the line-based DG method. We define
\textit{entropy-conservative} and \textit{entropy-stable} numerical flux
functions. An \textit{entropy-conservative} two-point numerical flux is a
function $\bm f_{i,S}$, $1 \leq i \leq d$ that satisfies the following
properties:
\begin{enumerate}
\item Consistency: $\bm f_{i,S}(\bm u, \bm u) = \bm f_i(\bm u)$.
\item Symmetry: $\bm f_{i,S}(\bm u_L, \bm u_R) = \bm f_{i,S}(\bm u_R, \bm u_L)$.
\item Entropy conservation:
  $(\bm v_R - \bm v_L)^T \bm f_{i,S}(\bm u_L, \bm u_R) = \psi_{i,R}-\psi_{i,L}$.
\end{enumerate}
Additionally, we introduce an \textit{entropy-stable} two-point numerical flux
function $\widehat{\bm f}_S$ that satisfies:
\begin{enumerate}
\item Consistency: $\widehat{\bm f}_S(\bm u, \bm u) = \bm f(\bm u)$.
\item Symmetry: $\widehat{\bm f}_S(\bm u_L, \bm u_R)
  = \widehat{\bm f}_S(\bm u_R, \bm u_L)$.
\item Entropy stability:
  $(\bm v_R - \bm v_L)^T \widehat{\bm f}_S(\bm u_L, \bm u_R) \cdot \bm n
    \leq \left(\bm\psi_R - \bm\psi_L\right)\cdot \bm n$.
\end{enumerate}
We point out that in the entropy-stable case, the equality in the third property
has been replaced by an inequality. The states $\bm u_L$ and
$\bm u_R$ can either be traces of the states at an element interface evaluated
from within neighboring elements, or simply the state variables evaluated at
different points within a single element, as will be described in Section
\ref{sec:modified}.

\subsection{Euler equations and entropy variables}

As a particularly important example of governing equations, we consider the
Euler equations of gas dynamics in $d$ spatial dimensions, written in
conservative form
\begin{equation}
  \frac{\partial \bm u}{\partial t} + \nabla \cdot \bm f(\bm u) = 0,
\end{equation}
where $\bm u = (\rho, \rho \bm w, \rho E)$ is the vector of conserved variables:
$\rho$ is the density, $\rho \bm w$ is the momentum ($\bm w$ denotes
the fluid velocity), and $\rho E$ is the total energy. The flux function is
given by
\begin{equation}
  \bm f (\bm u) = \left( \begin{array}{c}
    \rho \bm w \\
    \rho \bm w \otimes \bm w^T + p I \\
    \rho H \bm w
  \end{array}\right).
\end{equation}
Here $I$ is the $d \times d$ identity matrix, $p$ is the pressure, and
$H = E + p/\rho$ is the stagnation enthalpy. The pressure is defined through the
equation of state
\begin{equation}
  p = (\gamma - 1) \rho (E - \|\bm w\|^2/2),
\end{equation}
where the constant $\gamma$ is the ratio of specific heats, taken in this work
as $\gamma = 7/5$.

We wish to introduce an entropy pair that simultaneously symmetrizes the Euler
equations as well as the viscous term in the compressible Navier-Stokes
equations. In this case, the entropy pair is unique \cite{Hughes1986}, and is
given by
\begin{equation}
  U = -\rho s, \qquad F = - \rho \bm w s,
\end{equation}
where $s = \log \left( p \rho^{-\gamma} \right)$. Given this entropy pair, the
entropy variables can be written as
\begin{equation}
  \bm v = \left( \begin{array}{c}
    -\rho E (\gamma -1)/ p + (\gamma + 1 - s) \\
    \rho \bm w (\gamma-1) / p  \\
    -\rho (\gamma - 1) / p
  \end{array} \right).
\end{equation}
Likewise, we can transform from the entropy variables to conservative variables
by
\begin{equation}
  \bm u = \left( \begin{array}{c}
    - p v_{d+2} / (\gamma - 1) \\
     p v_j / (\gamma -1 ) \\
     p\left(1 - \frac{1}{2}\sum_{i=2}^{d+1} v_i^2 / v_{d+2}\right)/(\gamma-1)
  \end{array} \right), \qquad j=2,\ldots,d+1,
\end{equation}
where, in terms of the entropy variables, we have
\begin{equation}
  s = \gamma - v_1 + \frac{1}{2}\sum_{i=2}^{d+1} v_i^2 / v_{d+2}, \qquad
  p/(\gamma-1) = \left(
    \frac{\gamma-1}{-v_{d+2}^\gamma}
  \right)^{1/(\gamma-1)} \exp\left(\frac{-s}{\gamma-1} \right).
\end{equation}
The entropy potential flux is given by
\begin{equation}
  \psi = (\gamma - 1) \rho \bm w.
\end{equation}

\subsubsection{Entropy-conservative and entropy-stable numerical fluxes}

There has been much recent interest in the development of entropy-conservative
and entropy-stable numerical flux functions for the Euler equations
\cite{Ismail2009,Ranocha2017,Chandrashekar2013,Ray2013}. In this work, for the
volume fluxes, we will use the two-point entropy-conservative flux of
Chandrashekar \cite{Chandrashekar2013}. For $d=2$, the flux is defined as
follows. We introduce the convenient notation for the arithmetic and logarithmic
means
\begin{equation}
  \{ \phi \} = \frac{1}{2}\left( \phi_L + \phi_R \right), \qquad
  \{ \phi \}_{\log} = \frac{\phi_R - \phi_L}{\log(\phi_R) - \log(\phi_L)}.
\end{equation}
A numerically stable procedure for evaluating $\{\phi\}_{\log}$ was given by
Ismail and Roe \cite{Ismail2009}. Chandrashekar's entropy-conservative numerical
flux is given by
\begin{equation}
\begin{aligned}
  \bm f_{1,S}^{(1)} &= \{ \rho \}_{\log} \{ w_1 \} \\
  \bm f_{1,S}^{(2)} &= \{ w_1 \} \bm f_{1,S}^{(1)}+\frac{\{\rho\}}{2\{\beta\}}\\
  \bm f_{1,S}^{(3)} &= \{ w_2 \} \bm f_{1,S}^{(1)}\\
  \bm f_{1,S}^{(4)} &= \left(
    \frac{1}{2(\gamma-1)\{\beta\}_{\log}} - \frac{\{w_1^2\} + \{w_2^2\}}{2}
    \right) \bm f_{1,S}^{(1)}
    + \{ w_1 \} \bm f_{1,S}^{(2)}
    + \{ w_2 \} \bm f_{1,S}^{(3)} \\
  \bm f_{2,S}^{(1)} &= \{ \rho \}_{\log} \{ w_2 \} \\
  \bm f_{2,S}^{(2)} &= \{ w_1 \} \bm f_{2,S}^{(1)} \\
  \bm f_{2,S}^{(3)} &= \{ w_2 \} \bm f_{2,S}^{(1)}+\frac{\{\rho\}}{2\{\beta\}}\\
  \bm f_{2,S}^{(4)} &= \left(
    \frac{1}{2(\gamma-1)\{\beta\}_{\log}} - \frac{\{w_1^2\} + \{w_2^2\}}{2}
    \right) \bm f_{2,S}^{(1)}
    + \{ w_1 \} \bm f_{2,S}^{(2)}
    + \{ w_2 \} \bm f_{2,S}^{(3)}
\end{aligned}
\end{equation}
where $\beta$ is the inverse temperature, defined by
\begin{equation}
  \beta = \frac{1}{2RT} = \frac{\rho}{2p}.
\end{equation}

At element interfaces, we must introduce an entropy-stable numerical flux
function. It is shown in \cite{Chen2017} that exactly solving the Riemann
problem at element interfaces results in an entropy-stable numerical flux.
However, this process can be computationally expensive, requiring the solution
of a system of nonlinear equations for every evaluation. For this reason, we opt
to use a simple local Lax-Friedrichs (LLF) flux function, defined by
\begin{equation}
  \widehat{\bm f}(\bm u_L, \bm u_R) = \frac{1}{2}\left( \bm f(\bm u_L)
    + \bm f(\bm u_R) \right) + \frac{\lambda}{2} \left(\bm u_L - \bm u_R\right),
\end{equation}
where $\lambda$ is chosen to bound the leftmost and rightmost wave speeds in the
corresponding Riemann problem. This numerical flux function is a special case of
the Harten-Lax-Van Leer (HLL) approximate Riemann solver \cite{Harten1983},
which can be shown to be entropy-stable \cite{Chen2017}.

\subsection{Line-based DG discretization}

The line-based discontinuous Galerkin discretization (Line-DG) is constructed by
modifying the standard nodal DG discretization on tensor-product elements
(mapped quadrilaterals or hexahedra), so that a sequence of
one-dimensional Galerkin problems are solved along each coordinate dimension. To
be more specific,  the spatial domain $\Omega$ is partitioned into a conforming
mesh $\T_h = \{ K_j : 1 \leq j \leq n_e \}$, such that $\bigcup_{j=1}^{n_e} K_j
= \Omega$. Each element $K_j \in \T_h$ is taken to be the image of the reference
element $\mathcal{R} = [0,1]^d$ under a transformation map $T_j : \R^d \to
\R^d$.

We now focus on a single element, $K \in \T_h$, with corresponding
transformation map $T$. We use the convention that $\bm x = T(\bm \xi)$, and
refer to $\bm x$ as \textit{physical coordinates}, and $\bm \xi$ as
\textit{reference
coordinates}.
We wish to perform a change of variables to rewrite
the conservation law \eqref{eq:cons-law} in the reference domain $\mathcal{R}$.
Let $J$ denote the Jacobian of $T$ (referred to as the
\textit{deformation gradient}),
\begin{equation}
  J = \left( \frac{\partial x_i}{\partial \xi_j} \right),
\end{equation}
and let $g = \det(J)$. Following a standard procedure to change spatial
variables, we define
\begin{equation}
  \wt{\bm u}(\bm \xi, t) = g \bm u( T(\bm \xi), t),
\end{equation}
and
\begin{equation} \label{eq:flux-transformation}
  \wt{\bm f} (\wt{\bm u}) = g J^{-1} \bm f(\wt{\bm u}/g).
\end{equation}
Then, $\wt{\bm u}$ evolves according to the transformed hyperbolic conservation
law
\begin{equation} \label{eq:reference-eq}
  \frac{\partial \wt{\bm u}(\bm \xi, t)}{\partial t}
    + \nabla \cdot \wt{\bm f}(\wt{\bm u}(\bm \xi, t)) = 0,
\end{equation}
where, in this case, the divergence is understood to be taken with respect to
the reference coordinates $\bm \xi$. In order to introduce the Line-DG method,
we discretize equation \eqref{eq:reference-eq} directly.

For simplicity of presentation, we describe the method for $d=2$. The extension
to three or more spatial dimensions is straightforward. The reference
coordinates are written $\bm \xi = (\xi, \eta)$. We fix a polynomial degree $p
\geq 1$, and introduce nodal interpolation points $\bm \xi_{ij} = (\xi_i, \xi_j)
\in \mathcal{R}$ according to a tensor-product Gauss-Lobatto rule, for $0 \leq
i,j \leq p$. We approximate the solution using time-dependent nodal values
$\wt{\bm u}_{ij}(t) \approx \wt{\bm u}(\bm \xi_{ij},t)$. The standard Line-DG
semi-discretization in space reads:
\begin{equation}
  \frac{\partial \wt{\bm u}_{ij}}{\partial t} + \bm q_{ij} + \bm r_{ij} = 0,
\end{equation}
where $\bm q_{ij}$ and $\bm r_{ij}$ are discretizations of the derivatives
\begin{equation}
  \bm q_{ij} \approx \frac{\partial \wt{\bm f}_1}{\partial \xi}(\xi_i, \xi_j),
    \qquad
  \bm r_{ij} \approx \frac{\partial \wt{\bm f}_2}{\partial \eta}(\xi_i, \xi_j),
\end{equation}
which are both obtained through a sequence of one-dimensional Galerkin problems,
described as follows.

We begin by defining $\bm q_{ij}$, which approximates the $\xi$-derivative of
$\wt{\bm f}_1$. $\bm r_{ij}$ is defined through an analogous procedure. We fix
an index $j$, $0 \leq j \leq p$. Then, we consider all the points $\bm q_{ij}$
that lie along this line. We view these nodal values as defining an
interpolating polynomial, which we write as $\bm q_{:j}(\xi) = \sum_{i=0}^p \bm
q_{ij} \phi_i(\xi)$, where $\phi_i(\xi_k) = \delta_{ij}$ is the Lagrange
interpolating polynomial defined at the one-dimensional Gauss-Lobatto points
$\xi_k$. Similarly, we define the polynomial $\wt{\bm u}_{:j}(\xi) =
\sum_{i=0}^p \wt{\bm u}_{ij} \phi_i(\xi)$. We choose the coefficients $\bm
q_{ij}$ such that they satisfy the Galerkin problem
\begin{equation} \label{eq:q}
  \int_0^1 \bm q_{:j}(\xi) \cdot \bm\theta(\xi) \, d\xi
     = -\int_0^1 \wt{\bm f}_1(\wt{\bm u}_{:j}(\xi)) \cdot \bm\theta'(\xi) \, d\xi
     + \widehat{\wt{\bm f}_1} \cdot w |_0^1,
\end{equation}
for all test functions $\bm\theta \in \left[ \mathcal{P}^p([0,1]) \right]^{n_c}$
(the space of vector-valued polynomials of degree $p$), where $\widehat{\wt{\bm
f}_1}$ is an appropriately-defined numerical flux function. Analogously, for
fixed $i$, $\bm r_{ij}$ is defined to satisfy
\begin{equation} \label{eq:r}
  \int_0^1 \bm r_{i:}(\eta) \cdot \bm\theta(\eta) \, d\eta
     = -\int_0^1 \wt{\bm f}_2(\wt{\bm u}_{i:}(\eta)) \cdot \bm\theta'(\eta) \, d\eta
     + \widehat{\wt{\bm f}_2} \cdot w |_0^1.
\end{equation}
Our discretization is complete once we specify the numerical flux functions
$\widehat{\wt{\bm f}_1}$ and $\widehat{\wt{\bm f}_2}$. We note that given a
vector $\bm N$ normal to the reference element $\mathcal{R}$, we obtain a
transformed vector $\bm n = g J^{-T} \bm N$, normal to the physical element $K$.
We then note that
\begin{equation}
  \bm n^T \bm f
    = \left( g J^{-T} \bm N\right)^T \bm f
    = \bm N^T g J^{-1} \bm f = \bm N^T \wt{\bm f}.
\end{equation}
Thus, given a numerical flux function $\widehat{\bm f}(\bm u^-, \bm u^+)$ from
any standard discontinuous Galerkin discretization, we can define
\begin{equation}
  \bm N^T \widehat{\wt{\bm f}_{\ }}(\wt{\bm u}^-, \wt{\bm u}^+)
    =  \bm n^T \widehat{\bm f}(\bm u^-, \bm u^+),
\end{equation}
allowing us to reuse standard DG numerical flux functions for the purposes of
our Line-DG discretization.

\subsection{Discrete entropy stability}

In order to solve the one-dimensional problems \eqref{eq:q} and \eqref{eq:r}, we
discretize the integrals using a quadrature rule. For this purpose, we use a
Gauss-Lobatto rule with $\mu \geq p + 1$ points, which is exact for polynomials
of degree $2 \mu - 3$. Here we emphasize that if $\mu = p+1$, then the Line-DG
method is exactly equivalent to the Gauss-Lobatto DG spectral element method
(DG-SEM) method. However, if $\mu > p + 1$, then the method is distinct from
DG-SEM, and possesses different properties.

Using techniques similar to those developed in
\cite{Fisher2013,Carpenter2014,Chen2017,Chan2018}, we modify the Line-DG
discretization as follows in order to obtain discrete entropy stability. Let
$\xi_\alpha$, $1 \leq \alpha \leq \mu$ denote the Gauss-Lobatto quadrature
points, and let $w_\alpha$ denote the quadrature weights. We define the
rectangular $\mu \times (p+1)$ quadrature evaluation matrix $G$ by
\begin{equation} \label{eq:G-matrix}
  G_{\alpha i} = \phi_i(\xi_\alpha).
\end{equation}
Similarly, we consider the $\mu \times (p+1)$ differentiation matrix
\begin{equation} \label{eq:D-matrix}
  D_{\alpha,i} = \phi_i'(\xi_\alpha).
\end{equation}
We also define the $\mu \times \mu$ diagonal quadrature weight matrix by
\begin{equation} \label{eq:W-matrix}
  W = \mathrm{diag} ( w_1, w_2, \ldots, w_\mu ).
\end{equation}
Additionally, we define the differentiation matrix at quadrature points,
\begin{equation} \label{eq:Dt-matrix}
  \wt{D}_{\alpha,\beta} = \wt{\phi}_\beta'(\xi_\alpha),
\end{equation}
where $\wt{\phi}_\beta$ are the Lagrange interpolating polynomials
of degree $\mu - 1$ defined using the quadrature points $\xi_\alpha$. Finally,
we define the $\mu \times \mu$ boundary evaluation matrix $B$ by
\begin{equation}
  B = \mathrm{diag} ( -1, 0, \ldots, 0, 1 ).
\end{equation}

\begin{prop} \label{prop:properties}
  We briefly summarize some of the properties of the above matrices.
  \begin{enumerate}[(i)]
  \item $D = \wt{D} G$
  \item $W \wt{D} + \wt{D}^T W = B$ (summation-by-parts property)
  \item $\sum_{\beta=1}^\mu \wt{D}_{\alpha,\beta} = 0$
        (derivative of a constant is zero)
  \item $\sum_{\alpha=1}^\mu \left(W \wt{D}\right)_{\alpha,\beta} =
  \begin{cases}
    -1, &\qquad \beta=1,\\
    1, &\qquad \beta=\mu,\\
    0, &\qquad \text{otherwise.}
  \end{cases}$
  \end{enumerate}
\end{prop}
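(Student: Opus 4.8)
The plan is to treat every coordinate vector as the data of a unique interpolating polynomial and to reduce each identity to an exact statement about polynomials, using the two structural facts available to us: the $\mu$-point Gauss-Lobatto rule integrates polynomials of degree $2\mu-3$ exactly, and its first and last nodes are the endpoints $\xi_1 = 0$ and $\xi_\mu = 1$. Throughout I would exploit that the nodal-value map is a bijection between $\mathcal{P}^p([0,1])$ and $\R^{p+1}$ (respectively between $\mathcal{P}^{\mu-1}([0,1])$ and $\R^\mu$), so that an identity verified on polynomial data holds as a matrix identity.

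For (i), I would show that both sides agree on the nodal data of an arbitrary $u \in \mathcal{P}^p([0,1])$. With $\bm u = (u(\xi_0),\ldots,u(\xi_p))^T$, the entries of $G\bm u$ are $\sum_i \phi_i(\xi_\alpha) u(\xi_i) = u(\xi_\alpha)$, since the degree-$p$ interpolant reproduces $u$. Because $p \leq \mu-1$, the degree-$(\mu-1)$ interpolant through the sampled values $u(\xi_\alpha)$ is again $u$, so $(\wt D G \bm u)_\alpha = u'(\xi_\alpha)$. This matches $(D \bm u)_\alpha = \sum_i \phi_i'(\xi_\alpha) u(\xi_i) = u'(\xi_\alpha)$. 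As this holds on all of $\R^{p+1}$, the matrix identity $D = \wt D G$ follows.

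The summation-by-parts identity (ii) is the crux, and the single step where exactness of the quadrature is essential. For nodal vectors $\bm u, \bm v \in \R^\mu$ of polynomials $u, v$ of degree $\mu-1$, I would observe that $\bm u^T W \wt D \bm v = \sum_\alpha w_\alpha\, u(\xi_\alpha)\, v'(\xi_\alpha)$ equals $\int_0^1 u\,v'\,d\xi$ exactly, since $u v'$ has degree at most $2\mu-3$. Adding the analogous expression for $\bm v^T W \wt D \bm u$ and using $\int_0^1 (uv)'\,d\xi = u(1)v(1) - u(0)v(0)$, the boundary term is exactly $\bm u^T B \bm v$ because $\xi_1 = 0$ and $\xi_\mu = 1$. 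Finally, rewriting $\bm v^T W \wt D \bm u = \bm u^T \wt D^T W \bm v$ (using symmetry of $W$) and invoking arbitrariness of $\bm u, \bm v$ yields $W \wt D + \wt D^T W = B$.

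Properties (iii) and (iv) are then short consequences. For (iii), since $\sum_\beta \wt\phi_\beta \equiv 1$, differentiating gives $\sum_\beta \wt\phi_\beta' \equiv 0$; evaluating at $\xi_\alpha$ yields $\sum_\beta \wt D_{\alpha\beta} = 0$, i.e. $\wt D \bm 1 = \bm 0$. For (iv), the column sums of $W\wt D$ are the row vector $\bm 1^T W \wt D$, and substituting (ii) gives $\bm 1^T W \wt D = \bm 1^T B - (\wt D \bm 1)^T W = \bm 1^T B$ by (iii); reading off the diagonal of $B$ produces $-1$ at $\beta=1$, $+1$ at $\beta=\mu$, and $0$ otherwise (alternatively, a direct computation $\sum_\alpha w_\alpha \wt\phi_\beta'(\xi_\alpha) = \int_0^1 \wt\phi_\beta'\,d\xi = \wt\phi_\beta(1) - \wt\phi_\beta(0)$ gives the same, again using exactness for the degree-$(\mu-2)$ integrand). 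The only genuine obstacle is confirming the degree count $2\mu-3$ in (ii); everything else is polynomial reproduction together with a single integration by parts.
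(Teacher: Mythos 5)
Your proposal is correct and follows essentially the same route as the paper: (i) by polynomial reproduction and exact differentiation of degree-$p$ data by both $D$ and $\wt{D}G$, (ii) by testing the bilinear form on degree-$(\mu-1)$ polynomials, using exactness of the $\mu$-point Gauss--Lobatto rule for the degree-$(2\mu-3)$ integrand together with integration by parts, and (iii)--(iv) as immediate consequences. The only cosmetic difference is that you add the two quadratic forms and invoke the product rule where the paper integrates by parts directly, which is the same computation.
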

\begin{proof}
\textit{(i).}
Let $\bm a \in \R^{p+1}$. Define the polynomial $a(\xi) = \sum_{i=0}^p a_i
\phi_i(\xi)$. Then, $(D\bm a)_\alpha = a'(\xi_\alpha)$ for $1 \leq \alpha \leq
\mu$ because $D$ exactly differentiates polynomials of degree $p$. Similarly,
$\wt{D}$ exactly differentiates polynomials of degree $\mu-1 \geq p$ (given in
terms of their values at quadrature points), and thus $(\wt{D} G \bm a)_\alpha =
a'(\xi_\alpha)$.

\noindent
\textit{(ii).}
Consider two polynomials, $a(\xi) = \sum_{\alpha=1}^\mu a_\alpha
\wt{\phi}_\alpha(\xi)$ and $b(\xi) = \sum_{\alpha=1}^\mu b_\alpha
\wt{\phi}_\alpha(\xi)$. Then, $\int_0^1 a'(\xi)b(\xi)\,d\xi = \bm b^T W \wt{D}
\bm a$, since the quadrature rule is exact for polynomials of degree $2\mu -3$.
Integrating by parts, we have
\[
  \int_0^1 a'(\xi)b(\xi)\,d\xi = -\int_0^1 b'(\xi) a(\xi)\,d\xi + ab|_0^1
  = \bm b^T(-\wt{D}^T W + B)\bm a,
\]
and since $a$ and $b$ were arbitrary, we conclude $WD^T + \wt{D}^TW = B$.

\noindent
\textit{(iii).}
This is immediate since $\wt{D}$ is exact for polynomials of degree $\mu-1$.

\noindent
\textit{(iv).}
This follows from properties \textit{(ii)} and \textit{(iii)}, since
\begin{equation}
\sum_{\alpha=1}^\mu \left(W \wt{D}\right)_{\alpha,\beta} = \bm 1^T W \wt{D}
= \bm 1^T W \wt{D} + \bm 1^T \wt{D}^T W = \bm 1^T B.
\end{equation}
\end{proof}

We define the one-dimensional mass matrix by $M = G^T W G$. Then, we can write
the variational form \eqref{eq:q} as
\begin{equation} \label{eq:q-weak}
  M \bm q = -D^T W \wt{\bm f}_1 (G \bm u)
    + \widehat{\wt{\bm f}}_1,
\end{equation}
where $\bm q, \wt{\bm f}_1,$ and $\bm u$ are interpreted appropriately as
vectors of coefficients. The index $j$ has been omitted for the sake
of brevity. This is known as the \textit{weak form}. We can also define the
\textit{strong form} as follows. We rewrite \eqref{eq:q-weak} using property
\textit{(i)} above,
\begin{equation}
  M \bm q = -G^T \wt{D}^T W \wt{\bm f}_1 (G \bm u)
    + \widehat{\wt{\bm f}_1},
\end{equation}
and then perform a discrete analog of integration by parts (property
\textit{(ii)} above) to obtain
\begin{equation} \label{eq:q-strong}
  M \bm q = G^T W \wt{D} \wt{\bm f}_1 (G \bm u) - G^T B \wt{\bm f}_1(G \bm u)
    + \widehat{\wt{\bm f}_1}.
\end{equation}
Similarly, the strong form for $\bm r_{ij}$ is given by
\begin{equation} \label{eq:r-strong}
  M \bm r = G^T W \wt{D} \wt{\bm f}_2 (G \bm u) - G^T B \wt{\bm f}_2(G \bm u)
    + \widehat{\wt{\bm f}_2}.
\end{equation}

\subsubsection{Entropy and quadrature projections} \label{sec:projection}

As in the work of Chan \cite{Chan2018}, a key procedure in constructing the
entropy-stable Line-DG scheme is the \textit{entropy projection} of the
conservative variables, defined as follows. Given $\wt{\bm u}(\xi, \eta)$, we
can compute the entropy variables $\wt{\bm v} (\wt{\bm u})$. We define
$\widehat{\bm v}$ to be the discrete $L^2$ projection of $\bm v$. That is,
$\widehat{\bm v}$ is the unique bivariate polynomial of degree $p$ in each
variable satisfying
\begin{equation}
  \sum_{\alpha,\beta=1}^\mu w_\alpha w_\beta
    \widehat{\bm v}(\xi_\alpha,\xi_\beta) \cdot \bm\phi(\xi_\alpha,\xi_\beta) =
  \sum_{\alpha,\beta=1}^\mu w_\alpha w_\beta
    \wt{\bm v}(\xi_\alpha,\xi_\beta) \cdot \bm\phi(\xi_\alpha,\xi_\beta).
\end{equation}
for all test functions $\bm \phi$. We then define the \textit{entropy-projected}
conservative variables by $\widehat{\bm u} = \bm u(\widehat{\bm v})$. We note
that for \textit{e.g.} Burgers' equation with square entropy function, we have
$\bm v = \bm u$, and thus entropy projection is the identity operator.
Additionally, when $\mu = p + 1$, the discrete $L^2$ projection reduces to the
identity, and so in this case, the entropy projection is also the identity,
resulting in a simplified scheme. Evaluating the entropy-conservative and
entropy-stable fluxes at the entropy-projected values will allow us to prove
entropy stability of the discrete scheme.

An additional ingredient required for discrete entropy stability is
an operation that we introduce called a quadrature projection.
Because the Line-DG method is based on consistent integration of the
$\xi$-derivative in the $\xi$-direction, and collocation in the
$\eta$-direction, and similarly, consistent integration of the $\eta$-derivative
in the $\eta$-direction, and collocation in the $\xi$-direction, we introduce a
projection operation to allow for consistent integration of both terms in both
directions. We are interested in computing discrete integrals of the form
\begin{equation} \label{eq:2d-quadrature}
  \sum_{\alpha,\beta=1}^\mu w_\alpha w_\beta \bm q(\xi_\alpha,\xi_\beta)
    \cdot \bm\phi(\xi_\alpha,\xi_\beta)
  \qquad\text{and}\qquad
  \sum_{\alpha,\beta=1}^\mu w_\alpha w_\beta \bm r(\xi_\alpha,\xi_\beta)
    \cdot \bm\phi(\xi_\alpha,\xi_\beta),
\end{equation}
for a given bivariate polynomial $\bm\phi$. Equivalently, these integrals can be
written in the form
\begin{equation}
  \bm\phi^T (M \otimes M) \bm q, \qquad \text{and} \qquad
  \bm\phi^T (M \otimes M) \bm r,
\end{equation}
where $M$ is the one-dimensional mass matrix, $\otimes$ represents the Kronecker
product, and $\bm\phi, \bm q,$ and $\bm r$ are interpreted as vectors of the
corresponding degrees of freedom. However, the more natural line-based
quadrature takes the form
\begin{equation}
  \bm\phi^T (\wt{M} \otimes M) \bm q, \qquad \text{and} \qquad
  \bm\phi^T (M \otimes \wt{M}) \bm r,
\end{equation}
where $\wt{M}$ is the diagonal mass matrix corresponding to the nodal
interpolation points. Thus, given approximations $\bm q$ and $\bm r$ to the
$\xi$- and $\eta$-derivatives, respectively, we define their
quadrature-projected variants by
\begin{equation} \label{eq:quadrature-projection}
  \widehat{\bm q} = (\wt{M}M^{-1} \otimes I) \bm q, \qquad \text{and} \qquad
  \widehat{\bm r} = (I \otimes \wt{M}M^{-1}) \bm r,
\end{equation}
such that
\begin{equation}
  \bm\phi^T (M \otimes M) \widehat{\bm q}
   = \bm\phi^T (\wt{M} \otimes M) \bm q,
  \qquad \text{and} \qquad
  \bm\phi^T (M \otimes M) \widehat{\bm r}
   =\bm\phi^T (M \otimes \wt{M}) \bm r,
\end{equation}
allowing for computation of the discrete integrals in \eqref{eq:2d-quadrature}
using the line-based quadrature that is more natural for the Line-DG scheme.

\subsubsection{Modified scheme} \label{sec:modified}
We modify the scheme to achieve entropy stability using entropy-conservative and
entropy-stable numerical fluxes with a flux differencing approach.
This approach is closely related to the family of schemes developed
by Fisher \cite{Fisher2013}, Chan \cite{Chan2018}, Parsani \cite{Parsani2016}
and Chen \cite{Chen2017}.
Equation
\eqref{eq:q-strong} is replaced by
\begin{equation} \label{eq:modified-q}
  M \bm q = 2 G^T W \wt{D} \circ \wt{\bm f}_{1,S} (\widehat{\bm u})\bm{1}
    - G^T B \wt{\bm f}_{1,S}(\widehat{\bm u}) + \widehat{\wt{\bm f}_1},
\end{equation}
and equation \eqref{eq:r-strong} is replaced by
\begin{equation} \label{eq:modified-r}
  M \bm r = 2 G^T W \wt{D} \circ \wt{\bm f}_{2,S} (\widehat{\bm u})\bm{1}
    - G^T B \wt{\bm f}_{2,S}(\widehat{\bm u}) + \widehat{\wt{\bm f}_2},
\end{equation}
where $\circ$ denotes the Hadamard product, defined by
\begin{equation}
  ( \wt{D} \circ \wt{\bm f}_{i,S} )_{\alpha,\beta}
    = \wt{D}_{\alpha,\beta} \wt{\bm f}_{i,S}(\widehat{\bm u}(\xi_\alpha,\xi_j),
      \widehat{\bm u}(\xi_\beta,\xi_j)),
\end{equation}
where, as before, the index $j$ is omitted on the left-hand side for
the sake of conciseness. The transformed flux functions are obtained by taking
the arithmetic average of the metric terms,
\begin{equation}
  \wt{\bm f}_{i,S} (\bm u_L, \bm u_R) =
    \sum_{j=1}^d \frac{1}{2} \left(g_L J_{ij,L}^{-1} + g_R J_{ij,R}^{-1}\right)
    \bm f_{j,S} (\bm u_L, \bm u_R),
\end{equation}
where $g_L$ and $J_{ij,L}$ are defined at the points at which $\bm u_{L}$ is
evaluated, and similarly for $g_R$ and $J_{ij,R}$.
The modified entropy-stable Line-DG method reads
\begin{equation} \label{eq:entropy-stable-scheme}
  \frac{\partial \wt{\bm u}_{ij}}{\partial t} + \widehat{\bm q}_{ij}
    + \widehat{\bm r}_{ij} = 0,
\end{equation}
where $\bm q_{ij}$ is defined by \eqref{eq:modified-q} and $\bm r_{ij}$ is
defined by \eqref{eq:modified-r}, and $\widehat{\bm q}$ and $\widehat{\bm r}$
are their quadrature-projected variants, respectively. Given these definitions,
we set out to prove the accuracy, conservation, and entropy stability of the
discrete scheme.

\begin{prop}[Accuracy] \label{prop:accuracy1}
Suppose $\bm u$ is a smooth solution to \eqref{eq:cons-law}, and let $\bm u_{ij}
= \bm u(\xi_i, \xi_j)$. Define $\bm q_{ij}$ and $\bm r_{ij}$ using
\eqref{eq:modified-q} and \eqref{eq:modified-r}, respectively. Then we obtain
\begin{equation} \label{eq:linedg-no-proj}
  \frac{\partial \bm u_{ij}}{\partial t} + \bm q_{ij} + \bm r_{ij}
    = \mathcal{O}(h^{\min(p+1,\mu-1)}).
\end{equation}
\end{prop}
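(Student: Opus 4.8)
The plan is to substitute the exact solution into the right-hand sides of \eqref{eq:modified-q}--\eqref{eq:modified-r} and estimate the resulting residual. Since $\bm u$ solves \eqref{eq:cons-law}, after the change of variables it solves the reference equation \eqref{eq:reference-eq}, so $\partial_t \wt{\bm u} = -\partial_\xi\wt{\bm f}_1 - \partial_\eta\wt{\bm f}_2$ at every node. Hence it suffices to show that $\bm q_{ij}$ reproduces $\partial_\xi\wt{\bm f}_1$ and $\bm r_{ij}$ reproduces $\partial_\eta\wt{\bm f}_2$ at the node $(\xi_i,\xi_j)$, each up to $\mathcal{O}(h^{\min(p+1,\mu-1)})$; by the symmetry of the construction I would treat only $\bm q_{ij}$. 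I work element-by-element in the reference derivative and absorb the affine $h$-scaling of the map at the end (the metric terms $gJ^{-1}$ are smooth and contribute no loss of order). Two error mechanisms must be controlled and shown not to interact badly: the replacement of the nodal fluxes by the Hadamard flux-difference, and the replacement of $\bm u$ by the entropy-projected state $\widehat{\bm u}$.

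For the first mechanism I would isolate a flux-differencing consistency lemma. From consistency and symmetry of $\bm f_{i,S}$, the quantity $\bm\phi(\bm a,\bm b) := 2\wt{\bm f}_{i,S}(\bm a,\bm b) - \wt{\bm f}_i(\bm a) - \wt{\bm f}_i(\bm b)$ vanishes on the diagonal together with its first derivatives there, so $\bm\phi(\bm a,\bm b) = \mathcal{O}(|\bm a-\bm b|^2)$. Using property \textit{(iii)} of Proposition \ref{prop:properties} (that $\sum_\beta \wt D_{\alpha\beta} = 0$), the row-$\alpha$ entry of $2\,\wt D\circ\wt{\bm f}_{i,S}(\widehat{\bm u})\bm 1$ collapses to $\sum_\beta \wt D_{\alpha\beta}\wt{\bm f}_i(\widehat{\bm u}_\beta) + \sum_\beta \wt D_{\alpha\beta}\bm\phi(\widehat{\bm u}_\alpha,\widehat{\bm u}_\beta)$. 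The first sum is ordinary spectral differentiation of the smooth nodal flux values; since $\wt D$ is exact on degree $\mu-1$ it equals $\partial_\xi\wt{\bm f}_i(\widehat{\bm u})$ at the node up to $\mathcal{O}(h^{\mu-1})$. The second sum carries a further cancellation: at fixed $\alpha$ the function $\xi\mapsto\bm\phi(\widehat{\bm u}(\xi_\alpha),\widehat{\bm u}(\xi))$ has a double zero at $\xi_\alpha$, so differentiating it with $\wt D$ annihilates the leading term and the remainder is of the same or higher order. Made quantitative with the mapped scaling, this shows the Hadamard term equals $\partial_\xi\wt{\bm f}_i(\widehat{\bm u})$ up to $\mathcal{O}(h^{\mu-1})$.

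The second mechanism, the entropy projection, is where the analysis is most delicate, and is the step I expect to be the main obstacle. Standard approximation theory gives $\widehat{\bm u}-\bm u = \bm u(\Pi_p\bm v) - \bm u(\bm v) = \mathcal{O}(h^{p+1})$ pointwise, but its derivative is only $\mathcal{O}(h^{p})$, so a naive substitution into $\partial_\xi\wt{\bm f}_i(\widehat{\bm u})$ appears to lose one order. The resolution is that $\bm q$ is not this pointwise derivative but its image under $M^{-1}$, i.e. a Galerkin projection onto $\mathcal{P}^p$: testing the residual against an arbitrary $\bm\theta\in[\mathcal{P}^p]^{n_c}$ and integrating by parts moves the derivative off the projection error, where it meets the defining $L^2$-orthogonality of $\Pi_p$ and collapses to a boundary term of size $\mathcal{O}(h^{p+1})$. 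This recovers the optimal order $p+1$ for the projection contribution, so that the two mechanisms combine to give $\mathcal{O}(h^{\min(p+1,\mu-1)})$ rather than $\mathcal{O}(h^{\min(p,\mu-1)})$.

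Finally I would dispatch the boundary terms $-G^T B\,\wt{\bm f}_{i,S}(\widehat{\bm u}) + \widehat{\wt{\bm f}_i}$: for a continuous exact solution the interior and exterior traces coincide, so consistency of the numerical flux makes this combination a jump that vanishes up to the $\mathcal{O}(h^{p+1})$ trace error of the entropy projection, contributing nothing to leading order. Assembling the three estimates, and checking the degenerate case $\mu=p+1$ (where $\Pi_p$ is the identity, the entropy projection disappears, and only the $\mathcal{O}(h^{\mu-1})=\mathcal{O}(h^{p})$ differentiation error survives, consistent with $\min(p+1,\mu-1)=p$), yields \eqref{eq:linedg-no-proj}.
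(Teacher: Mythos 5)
Your proposal is correct and follows the same overall route as the paper's proof: cancel the boundary terms by consistency of the numerical flux for a smooth (single-valued) solution, show the Hadamard flux-differencing term reproduces $\partial_\xi \wt{\bm f}_i$ to $\mathcal{O}(h^{\mu-1})$, account for the projection onto degree $p$, and finally control the entropy projection. Two of your steps differ in their mechanics, and both are improvements in rigor. First, for the flux-differencing consistency you split $2\wt{\bm f}_{i,S}(\bm a,\bm b)$ into $\wt{\bm f}_i(\bm a)+\wt{\bm f}_i(\bm b)$ plus a remainder with a double zero on the diagonal, and use property \textit{(iii)} of Proposition \ref{prop:properties} to kill the $\wt{\bm f}_i(\bm a)$ contribution; the paper instead differentiates $\xi\mapsto\wt{\bm f}_{i,S}(\bm u(\xi),\bm u(\xi))$ along the diagonal and invokes symmetry to identify $2\,\partial\wt{\bm f}_{i,S}/\partial\bm u_R$ with $\partial_\xi\wt{\bm f}_i$. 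The two arguments are equivalent in content, but yours makes the role of property \textit{(iii)} and the quadratic remainder explicit, which is closer to how such lemmas are usually stated. Second, and more substantively, you correctly flag that the entropy projection is the delicate point: the paper disposes of it in one line ($\widehat{\bm u}=\bm u+\mathcal{O}(h^{p+1})$, ``and the general case follows''), which silently ignores that applying $\wt{D}$ to an $\mathcal{O}(h^{p+1})$ perturbation naively costs a factor of $h^{-1}$. Your resolution --- exploiting the Galerkin structure of $\bm q$ to move the derivative onto the test function, where the $L^2$-orthogonality of the projection takes over --- is the standard repair (it is essentially the argument in Chan's work, which the paper cites at exactly this point), though to carry it out in the flux-differencing form you would also need the summation-by-parts identity $2W\wt{D}=W\wt{D}-\wt{D}^TW+B$ to produce the transposed-derivative term. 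In short: same decomposition and same conclusion, but your write-up supplies two justifications the paper leaves implicit.
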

\begin{proof}
Since $\bm u$ is smooth, $\widehat{\wt{\bm f}_i}$ is single-valued, and by
consistency of the numerical fluxes, the two boundary terms appearing in both
\eqref{eq:modified-q} and \eqref{eq:modified-r} cancel. We begin by assuming
that $\bm v = \bm u$ and so $\widehat{\bm u} = \bm u$. Then consider $\wt{\bm
f}_{i,S}( \bm u_L, \bm u_R)$, and define $\wt{\bm f}_i(\xi) = \wt{\bm
f}_{i,S}(\bm u(\xi,\xi_j), \bm u(\xi,\xi_j))$. Then, by symmetry of $\wt{\bm
f}_{i,S}$,
\begin{equation}
  \frac{\partial \wt{\bm f}_i}{\partial \xi}(\xi) = \left.\frac{\partial \wt{\bm
  f}_{i,S}}{\partial \bm u_L} \right|_{\bm u_L,\bm u_R = \bm u(\xi,\xi_j)} +
  \left. \frac{\partial \wt{\bm f}_{i,S}}{\partial \bm u_R} \right|_{\bm u_L,\bm
  u_R = \bm u(\xi,\xi_j)} = 2 \frac{\partial \wt{\bm f}_{i,S}}{\partial \bm
  u_R} (\bm u(\xi,\xi_j), \bm u(\xi,\xi_j)).
\end{equation}
Since $\wt{D}$ is exact for polynomials of degree $\mu - 1$, we have
\[
2 \wt{D} \circ \wt{\bm f}_{i,S} (G \bm u)\bm{1}
  = 2 \frac{\partial \wt{\bm f}_{i,S}}{\partial \bm u_R}+\mathcal{O}(h^{\mu-1})
  = \frac{\partial\wt{\bm f}_i}{\partial \xi} + \mathcal{O}(h^{\mu-1}).
\]
This quantity is then projected onto the space of polynomials of degree $p$,
and we obtain
\begin{equation}
  \bm q_{:j}(\xi) = \frac{\partial \wt{\bm f}_1}{\partial \xi}(\xi, \xi_j)
    + \mathcal{O}\left(h^{\min(p+1,\mu-1)}\right),
\end{equation}
and similarly,
\begin{equation}
  \bm r_{i:}(\eta) = \frac{\partial \wt{\bm f}_2}{\partial \eta}(\xi_i, \eta)
    + \mathcal{O}\left(h^{\min(p+1,\mu-1)}\right).
\end{equation}
To complete the proof, we now note that by accuracy of the $L^2$ projection,
$\widehat{\bm v} = \bm v(\bm u) + \mathcal{O}\left(h^{p+1}\right)$
\cite{Chan2018}. Thus $\widehat{\bm u} = \bm u +
\mathcal{O}\left(h^{p+1}\right)$, and the general case follows.
\end{proof}

\begin{rem}
If $\mu = p+1$, our method is identical to the DG-SEM method, and the
truncation error is suboptimal, scaling as $\mathcal{O}(h^p)$, as shown in
\cite{Chen2017}. If $\mu > p+1$, the order of accuracy is optimal, and the
truncation error scales as $\mathcal{O}(h^{p+1})$.
\end{rem}

\begin{prop}[Accuracy of quadrature projection] \label{prop:accuracy2}
Define $\widehat{\bm q}$ and $\widehat{\bm r}$ by
\eqref{eq:quadrature-projection}. Then,
\begin{equation*}
  \frac{\partial \wt{\bm u}_{ij}}{\partial t} + \widehat{\bm q}_{ij}
    + \widehat{\bm r}_{ij}
    = \mathcal{O}(h^p).
\end{equation*}
\end{prop}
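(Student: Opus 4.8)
The plan is to treat the quadrature projection as a small perturbation of the identity and to bound the extra truncation error it introduces, then combine with Proposition~\ref{prop:accuracy1}. Writing the residual as
\[
  \frac{\partial \wt{\bm u}_{ij}}{\partial t} + \widehat{\bm q}_{ij} + \widehat{\bm r}_{ij}
  = \left( \frac{\partial \wt{\bm u}_{ij}}{\partial t} + \bm q_{ij} + \bm r_{ij} \right)
    + \left( \widehat{\bm q}_{ij} - \bm q_{ij} \right)
    + \left( \widehat{\bm r}_{ij} - \bm r_{ij} \right),
\]
the first grouped term is $\mathcal{O}(h^{\min(p+1,\mu-1)})$ by Proposition~\ref{prop:accuracy1}, and since $\mu \geq p+1$ forces $\min(p+1,\mu-1) \geq p$, it suffices to show that each correction term is $\mathcal{O}(h^p)$. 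By the tensor-product structure of \eqref{eq:quadrature-projection}, this reduces to a one-dimensional statement: the line-wise operator acting through $\wt{M}M^{-1}$ perturbs the nodal values of the smooth function $\partial \wt{\bm f}_1 / \partial \xi$ by only $\mathcal{O}(h^p)$, and symmetrically for $\partial \wt{\bm f}_2 / \partial \eta$.

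The crucial structural fact I would establish is that the quadrature projection reproduces polynomials of degree $p-1$ exactly. When $\mu = p+1$ the projection is the identity and there is nothing to prove, so I assume $\mu \geq p+2$. From the defining relation in \eqref{eq:quadrature-projection}, for every test polynomial $\bm\theta$ of degree $p$ one has $\bm\theta^T M \widehat{\bm q} = \bm\theta^T \wt{M} \bm q$, i.e.\ $\sum_\alpha w_\alpha \theta(\xi_\alpha)\widehat q(\xi_\alpha) = \sum_l \wt w_l\, \theta(\xi_l) q(\xi_l)$. Suppose $\bm q$ holds the nodal values of a polynomial of degree at most $p-1$. Then $q\theta$ has degree at most $2p-1$, so the right-hand ($(p+1)$-point Gauss--Lobatto) rule evaluates it exactly as $\int_0^1 q\theta\,d\xi$; meanwhile the left-hand side involves products of degree at most $2p$, integrated exactly by the $\mu$-point rule since $2\mu-3 \geq 2p$. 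Hence $\int_0^1 \widehat q\,\theta\,d\xi = \int_0^1 q\,\theta\,d\xi$ for all $\bm\theta \in \mathcal{P}^p$, and since $\widehat q \in \mathcal{P}^p$ we conclude $\widehat q = q$. Thus the operator $\wt{M}M^{-1} - I$ annihilates $\left[\mathcal{P}^{p-1}\right]^{n_c}$.

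To conclude, let $g(\xi) = \partial \wt{\bm f}_1/\partial\xi\,(\xi,\xi_j)$, which is smooth. Its degree-$(p-1)$ Taylor polynomial leaves a remainder of size $\mathcal{O}(h^p)$, in the same normalization used in Proposition~\ref{prop:accuracy1} (where a smooth function is reproduced to within $\mathcal{O}(h^{k+1})$ by its degree-$k$ part). Since the projection reproduces this degree-$(p-1)$ part exactly and is a bounded, $h$-independent operator on the fixed reference element, applying $\wt{M}M^{-1} - I$ to the nodal values of $g$ yields $\widehat{\bm q} - \bm q = \mathcal{O}(h^p)$; the identical argument in $\eta$ gives $\widehat{\bm r} - \bm r = \mathcal{O}(h^p)$. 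Finally, replacing $\bm u$ by the entropy-projected $\widehat{\bm u}$ alters $g$ by $\mathcal{O}(h^{p+1})$, exactly as in the proof of Proposition~\ref{prop:accuracy1}, which is higher order and does not affect the bound. Substituting into the splitting above gives $\mathcal{O}(h^p)$.

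The main obstacle is the honest accounting for the loss of one order relative to Proposition~\ref{prop:accuracy1}: the discrete $L^2$ projection reproduces $\mathcal{P}^p$ and is $\mathcal{O}(h^{p+1})$-accurate, whereas the quadrature projection reproduces only $\mathcal{P}^{p-1}$ and is therefore only $\mathcal{O}(h^p)$-accurate. Pinning down the degree-reproduction claim rigorously—identifying which polynomials are fixed by $\wt{M}M^{-1}$ via the exactness threshold $2p-1$ of the $(p+1)$-point Gauss--Lobatto rule—is the heart of the argument; the surrounding estimates are routine Taylor/Bramble--Hilbert bounds together with the boundedness of the operator on the fixed reference element.
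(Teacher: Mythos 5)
Your proof is correct and takes essentially the same route as the paper's: both hinge on the $(p+1)$-point Gauss--Lobatto rule being exact for degree $2p-1$, so that $\wt{M}$ agrees with the exact mass matrix on $\left[\mathcal{P}^{p-1}\right]^{n_c}$ and hence $M^{-1}\wt{M}$ perturbs the nodal values of a smooth function by only $\mathcal{O}(h^p)$. Your version simply spells out the weak-form characterization of the projection and the Taylor estimate that the paper's two-line argument leaves implicit.
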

\begin{proof}
If $\mu = p+1$, then $\wt{M} = M$, and the result follows from Proposition
\ref{prop:accuracy1}. If $\mu > p +1$, we note that $\wt{M}$ is defined by
quadrature at the nodal interpolation points, which is exact for polynomials of
degree $2(p+1) - 3 = 2p - 1$. Thus, $\wt{M}$ agrees with the exact mass matrix
when applied to polynomials of degree $p-1$. So, $\widetilde{\bm q} =
M^{-1}\wt{M} \bm q = \bm q + \mathcal{O}(h^p)$, and the result follows.
\end{proof}

\begin{rem}
Proposition \ref{prop:accuracy2} implies that the quadrature projection
operation introduced in order to obtain discrete entropy stability results in a
loss of accuracy. This is verified in the numerical experiments shown in Section
\ref{sec:results}. Empirically, we observe that, for many cases, the quadrature
projection is not required for robustness of the method, and the more accurate
method defined by \eqref{eq:linedg-no-proj} may be used instead. However, for
provable discrete entropy stability, we require the method given by
\eqref{eq:entropy-stable-scheme}.
\end{rem}

\begin{prop}[Conservation] Given periodic or compactly supported boundary
conditions and an affine mesh, then $\frac{d}{dt}\int_\Omega \bm u(\bm x, t) \,d\bm x = 0$,
where $\bm u$ is obtained through the Line-DG method.
\end{prop}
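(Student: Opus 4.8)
The goal is to show that $\frac{d}{dt}\int_\Omega \bm u\,d\bm x = 0$. The plan is to integrate the semi-discrete scheme \eqref{eq:entropy-stable-scheme} against the constant test function, sum over all elements, and show that the volume contributions vanish and the surface contributions telescope. Since the mesh is affine, the metric terms $g$ and $J^{-1}$ are constant on each element, so $\int_\Omega \bm u \, d\bm x$ reduces (up to the constant Jacobian) to the discrete integral of $\wt{\bm u}$ over the reference element, which in the chosen quadrature is $\bm 1^T (M \otimes M) \wt{\bm u}$. Differentiating in time and substituting \eqref{eq:entropy-stable-scheme}, I need to show that $\bm 1^T(M\otimes M)(\widehat{\bm q} + \widehat{\bm r})$, summed over elements, is zero.

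First I would handle the quadrature projection. By the defining property of $\widehat{\bm q}$ and $\widehat{\bm r}$ in Section \ref{sec:projection}, testing against any polynomial $\bm\phi$ gives $\bm\phi^T(M\otimes M)\widehat{\bm q} = \bm\phi^T(\wt M \otimes M)\bm q$, and analogously for $\bm r$. Taking $\bm\phi = \bm 1$ converts the full-mass quadrature into the line-based quadrature, so it suffices to analyze $\bm 1^T(\wt M \otimes M)\bm q$ and $\bm 1^T(M \otimes \wt M)\bm r$. Because these factorize across the two coordinate directions, the computation reduces to the one-dimensional statement: testing the strong-form equation \eqref{eq:modified-q} against the constant on a single line.

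Next I would compute the one-dimensional contribution by applying $\bm 1^T$ (after multiplying \eqref{eq:modified-q} through, noting $M = G^T W G$ and that $G\bm 1 = \bm 1$ since the Lagrange basis is a partition of unity). The volume term is $2\,\bm 1^T G^T W \wt D \circ \wt{\bm f}_{1,S}(\widehat{\bm u})\bm 1$; since $\bm 1^T G^T = \bm 1^T$, this is $2\,\bm 1^T W\wt D \circ \wt{\bm f}_{1,S}\bm 1$. Here the key tool is Proposition \ref{prop:properties}: I would use the summation-by-parts relation $W\wt D + \wt D^T W = B$ together with the symmetry of $\wt{\bm f}_{1,S}$ to show that the double Hadamard sum collapses to a pure boundary contribution, $-\bm 1^T G^T B \wt{\bm f}_{1,S}$, which exactly cancels the explicit boundary term $-G^T B\wt{\bm f}_{1,S}(\widehat{\bm u})$ appearing in \eqref{eq:modified-q}. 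What survives on each line is only the numerical interface flux $\widehat{\wt{\bm f}_1}$ evaluated at the two endpoints.

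Finally I would assemble the global sum. After the volume terms cancel, each element contributes only its boundary fluxes $\widehat{\wt{\bm f}_1}$ and $\widehat{\wt{\bm f}_2}$ at reference faces, transformed back via $\bm N^T\widehat{\wt{\bm f}} = \bm n^T\widehat{\bm f}$ into physical interface fluxes. Because the numerical flux $\widehat{\bm f}(\bm u^-,\bm u^+)$ is single-valued at each interface and the outward normals of adjacent elements are opposite, these contributions telescope: summing over the conforming mesh, every interior face is counted twice with opposite sign and cancels, while periodic or compactly supported boundary conditions kill the remaining exterior faces. I expect the main obstacle to be the volume-term cancellation in the previous step: showing that $\bm 1^T W\wt D \circ \wt{\bm f}_{1,S}\bm 1$ reduces to a boundary term requires carefully exploiting both the SBP property and the symmetry $\wt{\bm f}_{1,S}(\bm u_L,\bm u_R) = \wt{\bm f}_{1,S}(\bm u_R,\bm u_L)$, so that the antisymmetric part of $\wt D$ pairs with the symmetric flux to annihilate the interior and leave only the diagonal boundary matrix $B$.
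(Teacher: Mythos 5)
Your proposal is correct and follows essentially the same route as the paper's proof: test against the constant, collapse the Hadamard volume term via the summation-by-parts property and the symmetry of $\wt{\bm f}_{1,S}$ into boundary values that cancel the explicit $G^T B$ term, and let the single-valued interface fluxes telescope over the mesh. You are in fact slightly more careful than the paper in explicitly invoking the defining property of the quadrature projection to pass from $(M\otimes M)$ to the line-based quadrature (the paper handles this implicitly via exactness of the Gauss--Lobatto rules for degree-$p$ polynomials); just watch the sign when you write what the collapsed volume term equals, since it must be the negative of the $-G^T B\,\wt{\bm f}_{1,S}$ contribution for the cancellation to go through.
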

\begin{proof}
We consider one element $K$, mapped to the reference element $\mathcal{R}$.
Then,
\begin{equation}
  \frac{d}{dt}\int_K \bm u(\bm x, t) \,d\bm x
  = \frac{d}{dt}\int_\mathcal{R} \wt{\bm u}(\bm x, t) \,d\bm \xi
  = - \int_\mathcal{R} \bm q_{ij} \, d\bm \xi
    - \int_\mathcal{R} \bm r_{ij} \, d\bm \xi.
\end{equation}
We discretize the two integrals on the right-hand side using appropriate
line-based quadratures. Since $\bm u, \bm q,$ and $\bm r$ are all bivariate
polynomials of degree $p$, the Gauss-Lobatto quadratures associated with both
the solution nodes and the line-based quadrature points result in exact
integration. In particular, for fixed $j$, we have
\begin{equation}
\int_0^1 \bm q_{:j}(\xi) \, d \xi \approx
  \sum_{\alpha=1}^\mu w_\alpha \bm q_{:j}(\xi_\alpha, \xi_j) =\bm1^TM\bm q_{:j}.
\end{equation}
By definition of $\bm q$ we have
\begin{equation}
  \bm 1^T M\bm q = \bm 1^T \left( 2 G^T W \wt{D} \circ \wt{\bm f}_{1,S} \bm{1}
    - G^T B \wt{\bm f}_1 - {red}\widehat{\wt{\bm f}}_{1,S} \right)
\end{equation}
The first term on the right-hand side is
\begin{align*}
  \bm 1^T \left(2 G^T W \wt{D} \circ \wt{\bm f}_{1,S} \bm{1} \right)
  &= 2 \sum_{\alpha,\beta=1}^\mu w_\alpha \wt{D}_{\alpha,\beta}
    \wt{\bm f}_{1,S}(\widehat{\bm u}(\xi_\alpha,\xi_j),
    \widehat{\bm u}(\xi_\beta, x_j)) \\
  &= \sum_{\alpha,\beta=1}^\mu w_\alpha \left(\wt{D}_{\alpha,\beta}
  + \wt{D}_{\beta,\alpha}\right)
    \wt{\bm f}_{1,S}(\widehat{\bm u}(\xi_\alpha,\xi_j),
    \widehat{\bm u}(\xi_\beta, x_j)) \\
  &= \wt{\bm f}_1(\widehat{\bm u}(\xi_\mu,\xi_j))
   - \wt{\bm f}_1(\widehat{\bm u}(\xi_1,\xi_j))
\end{align*}
by symmetry of $\wt{\bm f}_{1,S}$ and the summation-by-parts property of
$\wt{D}$. Thus, the boundary terms cancel, and we are left only with the
numerical flux term $\widehat{\wt{\bm f}_{1}}$. Summing over all elements $K$,
using that the numerical flux is single-valued, and repeating an analogous
argument for $\bm r$, we obtain the desired result.
\end{proof}

\begin{lem}\label{lem:entropy}
Given periodic or compactly supported boundary conditions and an affine mesh
$\mathcal{T}_h$, the discrete line-based approximation to
$\frac{d}{dt}\int_\Omega U(\bm x, t) \,d\bm x$ is bounded above by zero.
\end{lem}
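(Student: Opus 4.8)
The plan is to compute the time derivative of a discrete entropy, test the evolution equation against the projected entropy variables, telescope the volume terms, and control the surviving interface terms with the entropy-stability of the numerical flux. I would take as the discrete total entropy the line-based Gauss--Lobatto quadrature of $U$ applied to the interpolated solution,
\[
  \bar U(t) = \sum_{K} g_K \sum_{\alpha,\beta=1}^\mu w_\alpha w_\beta\,
    U\!\left(\bm u(\xi_\alpha,\xi_\beta)\right),
\]
which on an affine mesh (constant $g_K$) is exactly the quantity named in the statement. Differentiating in time and using $\bm v = U'(\bm u)$ together with $\wt{\bm u} = g_K \bm u$ gives $\dot{\bar U} = \sum_K \sum_{\alpha,\beta} w_\alpha w_\beta\, \bm v(\xi_\alpha,\xi_\beta)^T \dot{\wt{\bm u}}(\xi_\alpha,\xi_\beta)$. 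The first key step is to note that $\dot{\wt{\bm u}}$ is a polynomial of degree $p$, so by the defining property of the discrete $L^2$ projection the sum is unchanged if $\bm v$ is replaced by its projection $\widehat{\bm v}$; in matrix form $\dot{\bar U} = \sum_K \widehat{\bm v}^T (M\otimes M)\,\dot{\wt{\bm u}}$. This is precisely the role of the entropy projection.

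Next I would substitute the scheme \eqref{eq:entropy-stable-scheme}, $\dot{\wt{\bm u}} = -(\widehat{\bm q}+\widehat{\bm r})$, and apply the quadrature-projection identity with $\widehat{\bm v}$ as the polynomial test function to obtain $\widehat{\bm v}^T(M\otimes M)\widehat{\bm q} = \widehat{\bm v}^T(\wt M\otimes M)\bm q$, and similarly for $\bm r$. This collapses the two-dimensional balance into a weighted sum of one-dimensional line contributions of the form $\wt w_j\,\widehat{\bm v}_{:j}^T M \bm q_{:j}$. On a single line I would insert the flux-differencing form \eqref{eq:modified-q}. Because the two-point flux is evaluated at the entropy-projected states $\widehat{\bm u} = \bm u(\widehat{\bm v})$, the entropy variable attached to each quadrature value is exactly $(G\widehat{\bm v})_\alpha$, so the entropy-conservation property of $\bm f_{i,S}$ applies termwise with the projected variables. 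I would then telescope the Hadamard volume term using the summation-by-parts property \textit{(ii)} together with \textit{(iii)}--\textit{(iv)} of Proposition \ref{prop:properties}, plus symmetry and consistency of $\wt{\bm f}_{i,S}$: the interior flux evaluations at the two endpoints cancel the explicit $-G^T B\,\wt{\bm f}_{i,S}$ boundary term, leaving only the flux-potential differences $\wt\psi_1 - \wt\psi_\mu$ together with the numerical-flux surface term $\widehat{\bm v}^T\widehat{\wt{\bm f}}$. Here the affine-mesh hypothesis is essential: with constant metrics the arithmetic averaging of $g J^{-1}$ is exact, so $\wt{\bm f}_{i,S}$ inherits entropy conservation with the transformed potential $\wt\psi_i = \sum_j g J^{-1}_{ij}\psi_j$.

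Finally I would sum over lines, over both coordinate directions, and over all elements. The surviving quantities are, on each face, the flux-potential boundary value and the numerical-flux term evaluated at the projected traces $\widehat{\bm u}_L,\widehat{\bm u}_R$ from the two neighboring elements. Grouping the two contributions that share an interface and using that $\widehat{\bm f}_S$ is single-valued, each interface contributes $(\widehat{\bm v}_R - \widehat{\bm v}_L)^T\,\widehat{\bm f}_S(\widehat{\bm u}_L,\widehat{\bm u}_R)\cdot\bm n - (\bm\psi_R - \bm\psi_L)\cdot\bm n$, which is nonpositive by the entropy-stability property (property 3 of $\widehat{\bm f}_S$). Under periodic or compactly supported boundary conditions there are no domain-boundary contributions, so $\dot{\bar U} \le 0$.

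I expect the main obstacle to be the sign bookkeeping in the telescoping step: verifying that the interior flux evaluations exactly cancel the explicit boundary term on each line, and that the per-element flux-potential values combine with the single-valued numerical flux across each shared face into precisely the jump form that the entropy-stability inequality controls. A secondary subtlety, which underpins the whole argument, is justifying that evaluating $\bm f_{i,S}$ at the entropy-projected states $\widehat{\bm u}$ makes the associated entropy variables coincide with the polynomial $\widehat{\bm v}$, since this is what permits the entropy-conservation identity to be invoked inside the Hadamard sum.
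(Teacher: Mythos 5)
Your proposal is correct and follows essentially the same route as the paper's proof: test against the $L^2$-projected entropy variables (valid because $\bm q,\bm r$ are degree-$p$ polynomials), rewrite $2W\wt{D}$ via the summation-by-parts property to obtain the jump form, invoke entropy conservation of the two-point flux at the entropy-projected states to telescope the volume term into flux-potential boundary differences, and close with the entropy-stability inequality at shared interfaces, using the affine-mesh assumption to keep $gJ^{-1}$ constant. The only organizational difference is that you fold the quadrature-projection identity into the lemma itself, whereas the paper defers that step to Proposition \ref{prop:entropy-stability}.
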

\begin{proof}
Since $\bm v = U'(\bm u)$ we have $\frac{\partial U}{\partial t} = \bm v^T
\frac{\partial \bm u}{\partial t}$, and thus, assuming continuity in time,
\begin{equation}
  \frac{d}{dt}\int_\Omega U(\bm x, t) \,d\bm x
   = \int_\Omega \bm v(\bm x, t)^T
    \frac{\partial \bm u}{\partial t}(\bm x, t) \,d\bm x.
\end{equation}
As before, we consider a single element $K$ with corresponding transformation
mapping $T$, and rewrite the integral over the reference element, as
\begin{equation}
   \int_K \bm v(\bm x, t)^T\frac{\partial \bm u}{\partial t}(\bm x, t) \,d\bm x
   = \int_\mathcal{R} \wt{\bm v}(\bm \xi, t)^T
    \frac{\partial \wt{\bm u}}{\partial t}(\bm \xi, t) \,d\bm \xi,
\end{equation}
where $\wt{\bm u} (\bm \xi, t) = g \bm u (T(\bm \xi), t)$ and
$\wt{\bm v}(\bm \xi, t) = \bm v(T(\bm \xi), t)$. We replace
$\frac{\partial\wt{\bm u}}{\partial t}$ by $-(\bm q+\bm r)$, and discretize the
above integrals using the appropriate line-based quadratures.

To begin, we consider, for fixed index $j$,
\begin{equation}
\int_0^1 \wt{\bm v}(\xi,\xi_j)^T \bm q_{:j}(\xi) \, d\xi \approx
  \sum_{\alpha=1}^\mu w_\alpha \wt{\bm v}(\xi_\alpha, \xi_j)^T
    \bm q_{:j}(\xi_\alpha, \xi_j).
\end{equation}
Let $\widehat{\bm v}$ denote the $L^2$ projection of $\bm v(\xi, \xi_j)$ onto
$[\mathcal{P}^p([0,1])]^{n_c}$. Then, since $q_{:,j}$ is itself a polynomial of
degree $p$, we have
\begin{equation}
\sum_{\alpha=1}^\mu w_\alpha \wt{\bm v}(\xi_\alpha, \xi_j)^T
    \bm q_{:j}(\xi_\alpha, \xi_j)
= \sum_{\alpha=1}^\mu w_\alpha \widehat{\bm v}(\xi_\alpha, \xi_j)^T
    \bm q_{:j}(\xi_\alpha, \xi_j)
= \widehat{\bm v}^T M \bm q.
\end{equation}
Using the definition of $\bm q$ in \eqref{eq:modified-q}, we have
\begin{equation} \label{eq:entropy-integral}
  \widehat{\bm v}^T M \bm q =
    \widehat{\bm v}^T\left(2 G^T W \wt{D} \circ \wt{\bm f}_{1,S}
      (\widehat{\bm u})\bm{1}
      - G^T B \wt{\bm f}_{1,S}(\widehat{\bm u})+\widehat{\wt{\bm f}_1} \right).
\end{equation}
We use $2W\wt{D} = W\wt{D} - \wt{D}^TW + B$ to rewrite the first term on the
right-hand side as
\begin{equation}
  \sum_{\alpha,\beta=1}^\mu \widehat{\bm v}(\xi_\alpha,\xi_j)^T
    (w_\alpha \wt{D}_{\alpha,\beta} - w_\beta \wt{D}_{\beta,\alpha}
    + B_{\alpha\beta})
    \wt{\bm f}_{1,S}(\widehat{\bm u}(\xi_\alpha,\xi_j),
                     \widehat{\bm u}(\xi_\beta, x_j)).
\end{equation}
The boundary term exactly cancels the second term on the right-hand side of
\eqref{eq:entropy-integral}. We reindex and use symmetry of $\wt{\bm f}_{1, S}$
to write the remaining terms as
\begin{equation}
  \sum_{\alpha,\beta=1}^\mu \left(
     \widehat{\bm v}(\xi_\alpha,\xi_j)
   - \widehat{\bm v}(\xi_\beta,\xi_j) \right)^T w_\alpha \wt{D}_{\alpha,\beta}
    \wt{\bm f}_{1,S}(\widehat{\bm u}(\xi_\alpha,\xi_j),
                     \widehat{\bm u}(\xi_\beta, x_j)).
\end{equation}
We assume that the mesh is affine, and so $gJ^{-1}$ is constant. Thus, $\wt{\bm
f}_{1,S} = gJ^{-1}_{11} \bm f_{1,S} + gJ^{-1}_{12} \bm f_{2,S}$. Since
$\widehat{\bm u} = \bm u(\widehat{\bm v})$, we use the entropy conservation of
the two-point flux to write this sum as
\begin{align*}
  &\sum_{\alpha,\beta=1}^\mu  w_\alpha \wt{D}_{\alpha,\beta} \left(
  gJ^{-1}_{11} \left( \psi_{1,\alpha} - \psi_{1,\beta} \right)
  + gJ^{-1}_{21} \left( \psi_{2,\alpha} - \psi_{2,\beta} \right) \right) \\
 &\qquad = - \sum_{\alpha,\beta=1}^\mu  w_\alpha \wt{D}_{\alpha,\beta} \left(
  gJ^{-1}_{11} \psi_{1,\beta} + gJ^{-1}_{21} \psi_{2,\beta} \right) \\
 &\qquad = gJ^{-1}_{11} \left( \psi_{1,1} - \psi_{1,\mu} \right)
  + gJ^{-1}_{21} \left( \psi_{2,1} - \psi_{2,\mu} \right),
\end{align*}
where we used properties \textit{(iii)} and \textit{(iv)} of Proposition
\ref{prop:properties}. Therefore, the total entropy production corresponding
to $\bm q_{:j}$ for the element $K$ is given by
\begin{equation}
  gJ^{-1}_{11} \left( \widehat{\bm v}^T_\mu \widehat{\bm f}_{1,\mu}
   - \widehat{\bm v}^T_1 \widehat{\bm f}_{1,1}
   + \psi_{1,1} - \psi_{1,\mu} \right)
    + gJ^{-1}_{21} \left( \widehat{\bm v}^T_\mu \widehat{\bm f}_{2,\mu}
    - \widehat{\bm v}^T_1 \widehat{\bm f}_{2,1}
    + \psi_{2,1} - \psi_{2,\mu} \right).
\end{equation}
We now sum the contributions along a shared edge of two elements, $K_L$ and
$K_R$. We obtain
\begin{equation}
  gJ^{-1}_{11} \left( \widehat{\bm v}^T_L \widehat{\bm f}_1
   - \widehat{\bm v}^T_R \widehat{\bm f}_1
   + \psi_{1,R} - \psi_{1,L} \right)
    + gJ^{-1}_{21} \left( \widehat{\bm v}^T_L \widehat{\bm f}_2
    - \widehat{\bm v}^T_R \widehat{\bm f}_2
    + \psi_{2,R} - \psi_{2,L} \right) \leq 0
\end{equation}
using the entropy stability of the numerical flux function. We repeat a similar
argument for the term $\bm r_{ij}$.
\end{proof}
\begin{rem}
We note that the assumption that the mesh is affine in the above proposition can
be relaxed to allow for curved geometries, under the condition that the metric
terms satisfy a discrete version of the geometric conservation law
\cite{Kopriva2006}.
\end{rem}

\begin{prop}[Discrete entropy stability] \label{prop:entropy-stability}
Given periodic or compactly supported boundary conditions and an affine mesh
$\mathcal{T}_h$, we have
\begin{equation}
  \frac{d}{dt} \sum_{K\in\mathcal{T}_h} \sum_{\alpha,\beta=1}^\mu
    w_\alpha w_\beta U(\wt{\bm u}_K(\xi_\alpha,\xi_\beta)) \leq 0,
\end{equation}
where $\wt{\bm u}$ is given by \eqref{eq:entropy-stable-scheme}.
\end{prop}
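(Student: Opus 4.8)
The plan is to reduce this statement to Lemma~\ref{lem:entropy} by showing that the time derivative of the full tensor-product discrete entropy, when the scheme \eqref{eq:entropy-stable-scheme} is driven by the quadrature-projected derivatives $\widehat{\bm q}$ and $\widehat{\bm r}$, coincides \emph{exactly} with the line-based entropy production of the \emph{un}projected scheme that Lemma~\ref{lem:entropy} already bounds. First I would differentiate in time and apply the chain rule, writing on a single element $K$ that $\frac{d}{dt}\sum_{\alpha,\beta} w_\alpha w_\beta U(\wt{\bm u}(\xi_\alpha,\xi_\beta)) = \sum_{\alpha,\beta} w_\alpha w_\beta \wt{\bm v}(\xi_\alpha,\xi_\beta)^T \frac{\partial \wt{\bm u}}{\partial t}(\xi_\alpha,\xi_\beta)$, with $\wt{\bm v}$ the nodal entropy variables as in Lemma~\ref{lem:entropy} and the constant affine Jacobian $g$ carried along. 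Substituting the semidiscrete scheme $\frac{\partial \wt{\bm u}}{\partial t} = -(\widehat{\bm q}+\widehat{\bm r})$ turns the entropy rate into $-\sum_{\alpha,\beta} w_\alpha w_\beta \wt{\bm v}^T(\widehat{\bm q}+\widehat{\bm r})$, so it suffices to prove that this quantity, summed over all elements, is non-negative.

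The next step converts the two discrete approximations into the line-based forms. Since $\widehat{\bm q}$ and $\widehat{\bm r}$ are bivariate polynomials of degree $p$, and $\widehat{\bm v}$ is defined in Section~\ref{sec:projection} as the discrete $L^2$ projection of $\wt{\bm v}$ with respect to the same $\mu$-point quadrature, I would replace $\wt{\bm v}$ by $\widehat{\bm v}$ in the sums above; this is exact, not merely approximate, because testing the projection's defining identity against the degree-$p$ polynomials $\widehat{\bm q}$ and $\widehat{\bm r}$ incurs no error. In matrix form the per-element entropy production then reads $\widehat{\bm v}^T (M\otimes M)(\widehat{\bm q}+\widehat{\bm r})$. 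Applying the defining property of the quadrature projection (the identity immediately following \eqref{eq:quadrature-projection}) with the degree-$p$ test function $\widehat{\bm v}$ converts $(M\otimes M)\widehat{\bm q}$ into the line-based form $(\wt{M}\otimes M)\bm q$ and $(M\otimes M)\widehat{\bm r}$ into $(M\otimes \wt{M})\bm r$, yielding $\widehat{\bm v}^T(\wt{M}\otimes M)\bm q + \widehat{\bm v}^T(M\otimes \wt{M})\bm r$.

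This last expression is precisely the line-based entropy production analyzed in the proof of Lemma~\ref{lem:entropy}: each term is a sum over coordinate lines of the one-dimensional pairing $\widehat{\bm v}^T M \bm q$ (respectively $\widehat{\bm v}^T M \bm r$), weighted by the nodal quadrature weights of the transverse direction. I would therefore invoke the argument of Lemma~\ref{lem:entropy} directly: using $2W\wt{D} = W\wt{D} - \wt{D}^T W + B$ together with the summation-by-parts and consistency properties of Proposition~\ref{prop:properties}, the interior flux-differencing terms telescope by entropy conservation of $\wt{\bm f}_{i,S}$ into boundary contributions, the volume boundary terms cancel against the $-G^T B \wt{\bm f}_{i,S}$ terms in \eqref{eq:modified-q}--\eqref{eq:modified-r}, and summing the remaining interface contributions over shared edges gives a non-negative total by the entropy stability of the interface numerical flux. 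Hence $\sum_{K\in\mathcal{T}_h} \widehat{\bm v}^T(M\otimes M)(\widehat{\bm q}+\widehat{\bm r}) \ge 0$, and the discrete entropy is non-increasing.

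The main obstacle, and the only genuinely new point beyond Lemma~\ref{lem:entropy}, is establishing that projecting the \emph{derivatives} $\bm q,\bm r$ (rather than the fluxes) leaves the discrete entropy balance unchanged, i.e.\ that the full tensor-product entropy functional and the mixed line-based functional produce identical entropy-production rates. This rests entirely on testing the quadrature-projection identity against the \emph{projected} entropy variables $\widehat{\bm v}$, which is a degree-$p$ polynomial, so that the projection acts exactly; were one to test against the true nodal entropy variables $\wt{\bm v}$, the two functionals would differ and monotonicity of the full-quadrature entropy could fail. The remaining work is routine bookkeeping: carrying the constant Jacobian factor $g$ on an affine element, and confirming that each quadrature projection is applied in the tensor direction matching the Galerkin line along which the corresponding derivative was computed.
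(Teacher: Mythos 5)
Your proposal is correct and follows essentially the same route as the paper's proof: differentiate via the chain rule, observe that $\wt{\bm v}$ may be replaced exactly by its discrete $L^2$ projection $\widehat{\bm v}$ because $\widehat{\bm q}+\widehat{\bm r}$ is a degree-$p$ polynomial, convert $(M\otimes M)\widehat{\bm q}$ and $(M\otimes M)\widehat{\bm r}$ to the line-based forms $(\wt{M}\otimes M)\bm q$ and $(M\otimes\wt{M})\bm r$ via the quadrature-projection identity, and invoke Lemma~\ref{lem:entropy}. Your identification of the exactness of the projection step (testing against the degree-$p$ function $\widehat{\bm v}$ rather than $\wt{\bm v}$) as the one genuinely new ingredient beyond Lemma~\ref{lem:entropy} matches the paper precisely.
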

\begin{proof}
Noting that $\frac{\partial U}{\partial t} = \bm v^T \frac{\partial \bm
u}{\partial t}$, we have
\begin{align*}
  \frac{d}{dt} \sum_{K\in\mathcal{T}_h}
  \sum_{\alpha,\beta=1}^\mu w_\alpha w_\beta
    U(\wt{\bm u}_K(\xi_\alpha,\xi_\beta))
  &= \sum_{K\in\mathcal{T}_h} \sum_{\alpha,\beta=1}^\mu w_\alpha w_\beta
  \bm v(\wt{\bm u}_K(\xi_\alpha,\xi_\beta))^T\left(
  \widehat{\bm q}_K(\xi_\alpha,\xi_\beta)
  + \widehat{\bm r}_K(\xi_\alpha,\xi_\beta) \right) \\
  &= \sum_{K\in\mathcal{T}_h}
    \widehat{\bm v}_K^T \left(M \otimes M\right) \left( \widehat{\bm q}_K
    + \widehat{\bm r}_K \right) \\
  &= \sum_{K\in\mathcal{T}_h}
    \widehat{\bm v}_K^T \left(\wt{M} \otimes M\right) \widehat{\bm q}_K
    + \widehat{\bm v}_K^T \left(M \otimes \wt{M}\right) \widehat{\bm r}_K \\
  & \leq 0,
\end{align*}
using the definition of the quadrature projection and Lemma \ref{lem:entropy}.
\end{proof}

\section{Implementation and computational cost}
\label{sec:implementation}

The implementation of the Line-DG method is relatively simple, and
benefits greatly from the reuse of much of the infrastructure required for a
standard DG method: the fluxes, numerical flux functions, boundary conditions,
and metric terms remain, for the most part, unchanged. In fact, some features of
the method allow for significant simplifications: all volume integrals are
replaced with one-dimensional integrals, and no surface or face integrals are
required. A key feature of the Line-DG method compared with traditional DG
methods is its reduced computational cost. This reduced cost is attributed both
to the smaller number of flux evaluations and less expensive interpolation and
integration operations, when compared with standard DG.

We first compare the total number of flux evaluations required by each method.
The entropy-stable DG method requires the two-point flux function $\bm f_S$ to
be evaluated at all pairs of quadrature points. In a $d$-dimensional
tensor-product element with a quadrature rule based on one-dimensional Gaussian
quadrature with $\mu \geq p+1$ points, there are $\mu^d$ such points, requiring
$\mu^{2d}$ evaluations of $\bm f_S$. Symmetry of the flux $\bm f_S$ can be used
to reduce this number by about a factor of two. On the other hand, the Line-DG
method requires the evaluation of $\bm f_S$ at all pairs of quadrature points
along each line of nodes with an element. There are $d (p+1)^{d-1}$ such lines,
necessitating $d (p+1)^{d-1} \mu^2$ flux evaluations. As in the DG case, this
can be reduced by about a factor of two by exploiting the symmetry of $\bm f_S$.
The DG-SEM method can likewise take advantage of the same directional splitting,
requiring $d (p+1)^{d+1}$ total flux evaluations. Similar scaling is also required
for the related method of Parsani et al.\ \cite{Parsani2016}.
In Table \ref{tab:flux-evals} we compare the number of flux evaluations for
quadrature rules with $\mu = p + 2$ points and $\mu =
\left\lceil\frac{3}{2}(p+1)\right\rceil$ points (e.g.\ to correctly integrate a
quadratic nonlinearity in the flux function) for the specific case of $d=3$.

\begin{table}
\caption{Number of evaluations of $\bm f_S$ required by the full DG method with
general quadrature rules and the Line-DG method in 3 spatial dimensions, for
one-dimensional quadrature rules with $\mu$ points.}
\label{tab:flux-evals} \small \centering
\begin{tabular}{llllll}
\toprule
& & $p=3$ & $p=4$ & $p=5$ & $p=6$ \\
\midrule
\multirow{2}{*}{$\mu=p+2$}
& Full DG & 15{,}625 & 46{,}656 & 117{,}649 & 262{,}144 \\
& Line-DG & 1{,}200 & 2{,}700 & 5{,}292 & 9{,}408 \\
\midrule
\multirow{2}{*}{$\mu=\left\lceil\frac{3}{2}(p+1)\right\rceil$}
& Full DG & 46{,}656 & 262{,}144 & 531{,}441 & 1{,}771{,}561 \\
& Line-DG & 1{,}728 & 4{,}800 & 8{,}748 & 17{,}787 \\
\midrule
& DG-SEM & 768 & 1{,}875 & 3{,}888 & 7{,}203 \\
\bottomrule
\end{tabular}
\end{table}

Additionally, the Line-DG and DG-SEM methods require the evaluation of the
numerical flux function $\widehat{\bm f}$ only at nodal points lying on each face of a
given element, resulting in $(p+1)^{d-1}$ evaluations of $\widehat{\bm f}$. On the other
hand, a standard DG method requires the integration of $\widehat{\bm f}$ according to a
$(d-1)$-dimensional quadrature rule, resulting in $\mu^{d-1}$ evaluations, where $\mu \geq
p+1$.

We now consider the interpolation and integration operations. Since the Line-DG
method is based on the evaluation of one-dimensional integrals, the
interpolation and integration operations are equivalent to those of a standard
one-dimensional DG method. The interpolation operator $G$ is defined by
\eqref{eq:G-matrix}, and is a one-dimensional Vandermonde matrix. The
integration operator is given by $G^T W$, where $W$ is a diagonal matrix
consisting of the quadrature weights. The entropy-stable differentiation
operator is defined by $\widetilde{D}$, as given by \eqref{eq:Dt-matrix}. The
operators are identical among all elements, and along each of the spatial
dimensions. The complexity of applying these operators to an entire element is
linear in $p$ per degree of freedom, which is the same as a sum-factorized DG
method, although the implementation is significantly simpler
\cite{Pazner2018,Vos2010,Orszag1980}. Additionally, because these operators
are identical along each spatial dimension, the implementation can benefit from
batched BLAS-3 operations, by considering the degrees of freedom along all
lines of nodes within an element as a matrix of size $(p+1)\times d(p+1)^{d-1}$.
This is particularly important on modern computer architectures, for which
matrix-vector products are often memory-bound \cite{Srensen2012}. In
contrast, a collocated DG-SEM gives $G = I$, allowing one to avoid
the computation of interpolation operators.

Finally, we consider the storage cost of the method. The number of degrees of
freedom is the same as a standard DG method. However, the cost of storing
precomputed metric terms and interpolation matrices is reduced. All stored
matrices in the Line-DG method are either $(p+1)\times(p+1)$ or $\mu \times \mu$
in size, as compared with $(p+1)^d \times (p+1)^d$ or $\mu^d \times \mu^d$ for a
full DG method. Additionally, the metric terms need to be stored at quadrature
points. For the DG method, the inverse of the transformation Jacobian matrix is
a $d \times d$ matrix, thus requiring the storage of $d^2 \mu^d$ terms. In the
case of Line-DG, along each line of nodes, only one row of the inverse Jacobian
corresponding to the given coordinate dimension is required.
For example, in two dimensions, the method requires the computation of
$\partial \wt{\bm{f}}_1 / \partial \xi$ along horizontal lines and $\partial \wt{\bm{f}}_2
/ \partial \eta$ along vertical lines. According to the transformation
\eqref{eq:flux-transformation}, $\wt{\bm{f}}_1$ and $\wt{\bm{f}}_2$ are given by
\begin{equation}
   \wt{\bm{f}}_1 = g \sum_{j=1}^2 J^{-1}_{1j} \bm f_j,
   \qquad
   \text{and}
   \qquad
   \wt{\bm{f}}_2 = g \sum_{j=1}^2 J^{-1}_{2j} \bm f_j,
\end{equation}
and so only the metric terms $gJ^{-1}_{1j}$ must be stored along horizontal lines, and
likewise only the terms $gJ^{-1}_{2j}$ must be stored along vertical lines.
Each line consists of $\mu$ quadrature points, and there are $d (p+1)^{d-1}$ such lines,
thus necessitating the storage of $d^2 (p+1)^{d-1} \mu$ terms. Since $\mu \geq p + 1$, we
see that the Line-DG method enjoys reduced storage costs when compared with the standard
DG method.

\section{Numerical Results} \label{sec:results}

In the following sections we study the line-based DG scheme analysis above, applied to
several test problems in one, two and three spatial dimensions. In all of the below
examples, we integrate in time using an explicit Runge-Kutta scheme. We note that we have
only semi-discrete entropy-stability in the above analysis, and not fully-discrete
entropy-stability. However, we choose a time step sufficiently small such that the
temporal errors are all negligible compared with the spatial errors.

\subsection{1D sinusoidal Burgers'}

We begin with a simple test case for Burgers' equation
\begin{equation}
  u_t + \left(\tfrac{1}{2} u^2 \right)_x = 0,
\end{equation}
with periodic boundary conditions and sinusoidal initial conditions,
$u_0(x) = \frac{1}{2} + \sin(x)$. We choose the entropy function $U=\half u^2$.
The corresponding entropy-conservative numerical flux is defined by
(Cf.\ \cite{Tadmor2003})
\begin{equation}
  F_S(u_L, u_R) = \frac{1}{6}\left(u_L^2 + u_L u_R + u_R^2 \right),
\end{equation}
which corresponds to the standard skew-symmetric split splitting of Burgers'
equation \cite{Chen2017,Gassner2013}.
At element interfaces, the entropy-stable numerical flux function is defined by
solving the Riemann problem exactly. At $t=0.5$ the solution is still smooth,
however by $t=1.5$ the solution has developed a discontinuity. We begin by
computing the $L^\infty$ error of the Line-DG method at $t=0.5$, where we
compare to the exact solution, which is computed by following the characteristic
curves backwards in time.

We compare the accuracy of the Line-DG method using $\mu = p+1$ (\textit{i.e.}
DG-SEM) with the Line-DG method using $\mu > p + 1$. Only negligible differences
were observed for different values of $\mu$ greater than $p+1$. We additionally
remark that in the 1D case, the Line-DG method with is identical to a standard
DG method with the specified quadrature rule. For DG-SEM, our results agree with
those of Chen and Shu \cite{Chen2017}. In this case, we observe suboptimal
$\mathcal{O}(h^p)$ convergence in the $L^\infty$ error. For $\mu > p+1$, we
recover optimal $\mathcal{O}(h^{p+1})$ convergence in the $L^\infty$ norm.

Additionally, we compare the solution quality at $t=1.5$, after the shock has
developed. We compare the $\mu = p+1$ DG-SEM method to the Line-DG $\mu > p+1$
method, for $p=5$ and number of elements $N=120$. Both solutions display
non-physical oscillations in the vicinity of the shock, although the
oscillations are slightly smaller in magnitude when using $\mu > p+1$. In both
cases, the cell averages of the approximate solution well-approximate the true
solution.

\begin{figure}
\includegraphics{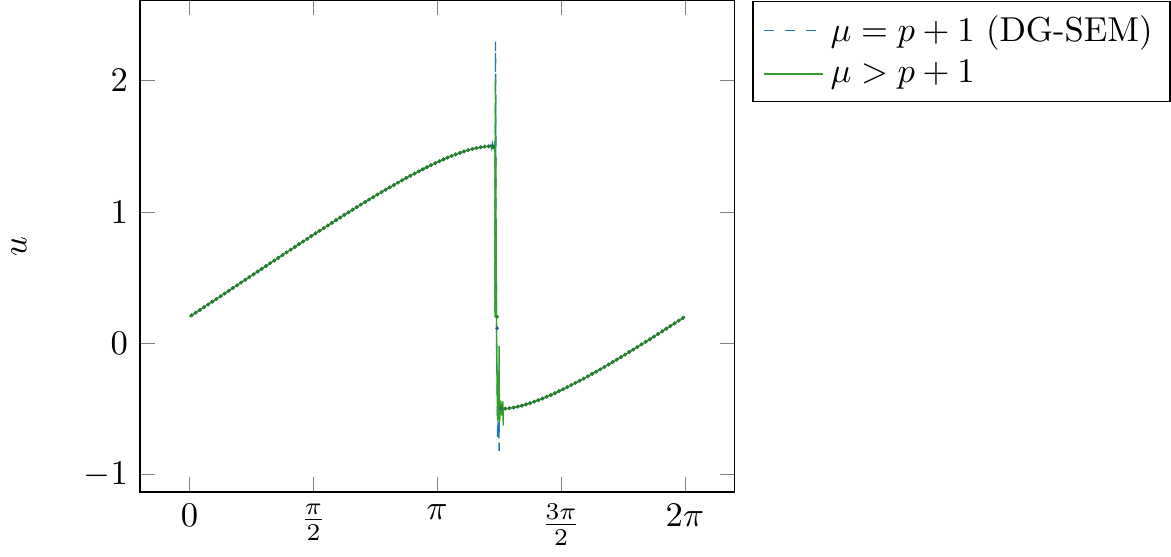}
\caption{Comparison of $\mu=p+1$ (DG-SEM) with $\mu>p+1$ for Burgers' equation,
         at $t=1.5$. Cell averages are indicated with plot markers.}
\label{fig:burgers-shock}
\end{figure}

\begin{table}
\caption{$L^\infty$ error and convergence rates for the smooth solution to
         1D Burgers' equation at $t=0.5$.}
\begin{tabular}{ll@{\hskip 16pt}ll@{\hskip 16pt}ll}
\toprule
& & \multicolumn{2}{c}{$\mu=p+1$} & \multicolumn{2}{c}{$\mu>p+1$} \\
& $N$ & Error & Rate & Error & Rate \\
\midrule
\multirow{4}{*}{$p=2$}
& 40  & $3.27\times10^{-3}$ & ---  & $7.59\times10^{-4}$ & ---  \\
& 80  & $7.92\times10^{-4}$ & 2.04 & $1.06\times10^{-4}$ & 2.84 \\
& 160 & $2.08\times10^{-4}$ & 1.93 & $1.47\times10^{-5}$ & 2.86 \\
& 320 & $5.10\times10^{-5}$ & 2.03 & $1.93\times10^{-6}$ & 2.93 \\
\midrule
\multirow{4}{*}{$p=3$}
& 40  & $1.65\times10^{-4}$ & ---  & $3.84\times10^{-5}$ & ---  \\
& 80  & $1.62\times10^{-5}$ & 3.35 & $2.54\times10^{-6}$ & 3.92 \\
& 160 & $1.31\times10^{-6}$ & 3.63 & $1.80\times10^{-7}$ & 3.82 \\
& 320 & $9.34\times10^{-8}$ & 3.81 & $1.17\times10^{-8}$ & 3.94 \\
\midrule
\multirow{4}{*}{$p=4$}
& 40  & $1.13\times10^{-5}$ & ---  & $3.09\times10^{-6}$  & ---  \\
& 80  & $7.16\times10^{-7}$ & 3.98 & $1.08\times10^{-7}$  & 4.85 \\
& 160 & $4.34\times10^{-8}$ & 4.04 & $3.90\times10^{-9}$  & 4.78 \\
& 320 & $2.62\times10^{-9}$ & 4.05 & $1.29\times10^{-10}$ & 4.92 \\
\midrule
\multirow{4}{*}{$p=5$}
& 40  & $7.12\times10^{-7}$  & ---  & $1.64\times10^{-7}$  & ---  \\
& 80  & $1.87\times10^{-8}$  & 5.25 & $3.49\times10^{-9}$  & 5.55 \\
& 160 & $3.93\times10^{-10}$ & 5.57 & $6.07\times10^{-11}$ & 5.85 \\
& 320 & $1.32\times10^{-11}$ & 4.89 & $1.03\times10^{-12}$ & 5.87 \\
\bottomrule
\end{tabular}
\end{table}

\subsection{1D shock tube}

In this section, we consider both the classic shock tube problem of Sod, as well
as a slightly modified Mach 2 shock tube problem. Both problems are solved on
the domain $\Omega = [-0.5, 0.5]$, and the initial conditions for both problems
posses a discontinuity at the origin,
\begin{equation}
  \bm u_0(x) = \begin{cases}
    \bm u_L, \qquad x < 0, \\
    \bm u_R, \qquad x \geq 0.
  \end{cases}
\end{equation}
The initial conditions for Sod's shock tube are given by
\begin{equation}
  \bm u_L =
  \left( \begin{array}{c} \rho_L \\ w_L \\ p_L \end{array} \right)
  = \left( \begin{array}{c} 1 \\ 0 \\ 1 \end{array} \right), \qquad
  \bm u_R =
  \left( \begin{array}{c} \rho_R \\ w_R \\ p_R \end{array} \right)
  = \left( \begin{array}{c} 1/8 \\ 0 \\ 1/10 \end{array} \right).
\end{equation}
The initial conditions for the Mach 2 shock are given by
\begin{equation}
  \bm u_L =
  \left( \begin{array}{c} \rho_L \\ w_L \\ p_L \end{array} \right)
  = \left( \begin{array}{c} 1.162 \\ 0 \\ 4.5 \end{array} \right), \qquad
  \bm u_R =
  \left( \begin{array}{c} \rho_R \\ w_R \\ p_R \end{array} \right)
  = \left( \begin{array}{c} 1/8 \\ 0 \\ 1/10 \end{array} \right).
\end{equation}
Both problems give rise to a rarefaction wave, a contact discontinuity, and a
shock. We solve both problems using $p=6$ polynomials with 48 elements, and
integrate in time until $t=0.1$. A comparison of the numerical solutions with
the exact solutions is shown in Figure \ref{fig:shock-tubes}. For the Sod shock
tube problem, the DG-SEM method and Line-DG method with $\mu > p+1$ give
comparable results. However, for the Mach 2 shock problem, the DG-SEM method
gives rise to significantly more prominent oscillations, demonstrating a
potential advantage of integrating with higher-accuracy quadrature rules. This
phenomenon is particularly noticeable in the velocity component of the solution.

\begin{figure}
\includegraphics{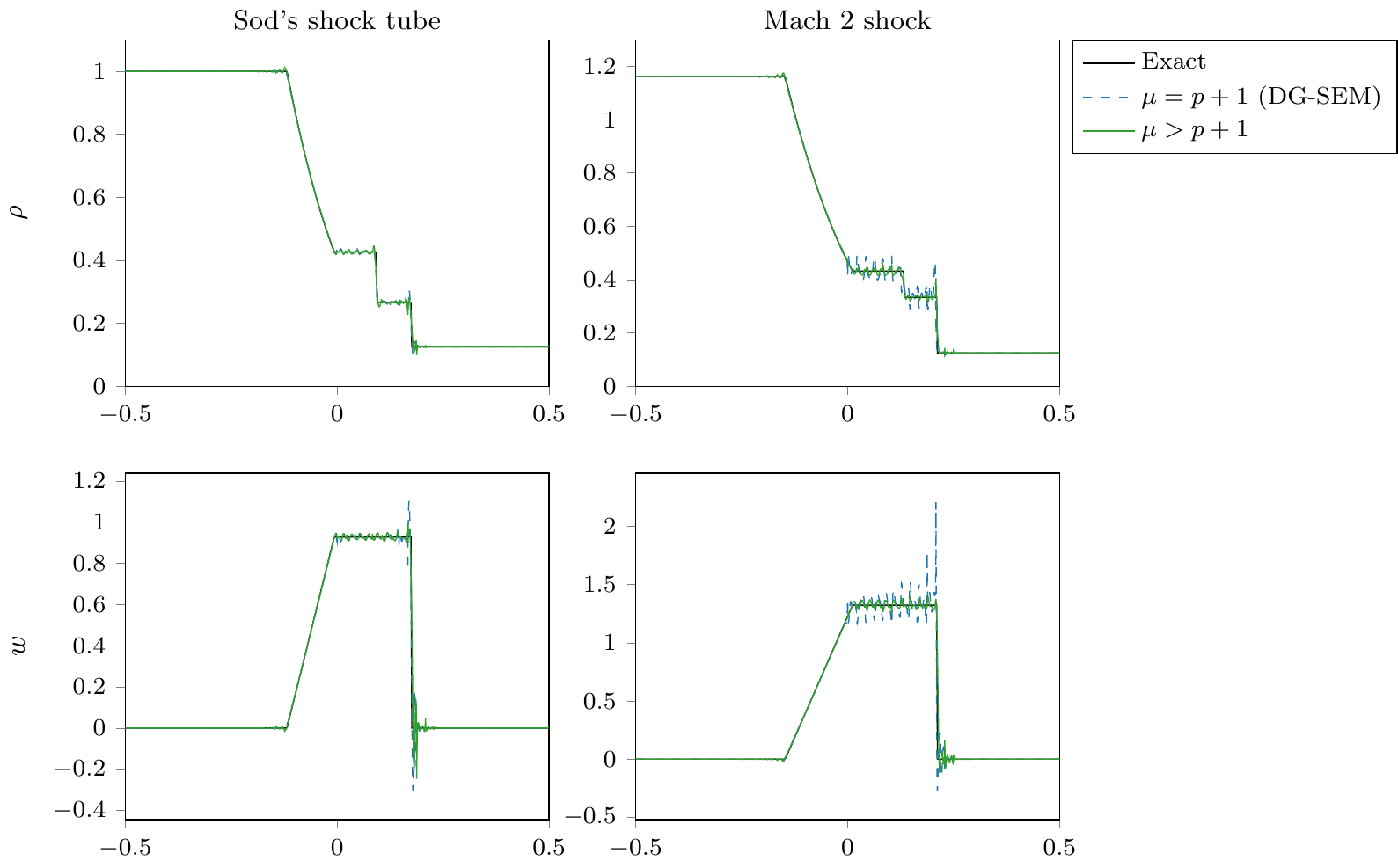}
\caption{Numerical and exact solutions to the shock tube problems at $t=0.1$.
         Density is shown in the top row and velocity is shown in the bottom
         row.}
\label{fig:shock-tubes}
\end{figure}

\subsection{2D Burgers' equation}
For the first two-dimensional test problem, we consider the Line-DG method
applied to the 2D Burgers' equation,
\begin{equation}
  u_t + \nabla\cdot\left( \tfrac{1}{2} \bm\beta u^2 \right) = 0,
\end{equation}
with constant velocity vector $\bm\beta = (1,1)$. We choose the smooth initial
conditions $u_0(x,y) = \half + \sin(x)\cos(y)$, and integrate in time until
$t=0.5$. At this point, the solution remains smooth, and as in the 1D case, we
can obtain the exact solution by tracing backwards along characteristic lines.

We compare the $L^2$ error obtained using the Line-DG method and the DG-SEM
($\mu=p+1$) method. We also consider the Line-DG method without the final
quadrature projection. Results are shown in Table \ref{tab:burgers2derrors}. For
both the DG-SEM and Line-DG methods, we observe slightly sub-optimal
convergence: approximately $\mathcal{O}(h^{p+1/2})$. Without the quadrature
projection operation, the convergence is approximately $\mathcal{O}(h^{p+1})$,
in accordance with Propositions \ref{prop:accuracy1} and \ref{prop:accuracy2}.
For each test case, the error obtained using the Line-DG method is smaller by
approximately a factor of two, and the rate of convergence appears
to be slightly faster than that of DG-SEM for a majority of cases. We also
investigate the entropy dissipation of each of these methods. Since both methods
are entropy-stable, the total entropy must be monotonically non-increasing. For
this test case the solution is smooth, and so the entropy for the exact solution
remains constant. In Figure \ref{fig:burgers2dentropy}, we compare the relative
deviation from the initial entropy, measured by $\int_\Omega \left( U(\bm
x,t)-U(\bm x, 0) \right)\,d\bm x/\int_\Omega U(\bm x, 0)\,d\bm x$, where
$U(x,t)$ is the square entropy, $U(x,t) = \half u(x,t)^2$. We numerically
observe the entropy stability of both methods, but note that the Line-DG method
dissipates less entropy than the DG-SEM method.

\begin{table}
\caption{$L^2$ error and convergence rates for the smooth solution to
         2D Burgers' equation at $t=0.5$.}
\label{tab:burgers2derrors}
\begin{tabular}{ll@{\hskip 16pt}ll@{\hskip 16pt}ll@{\hskip 16pt}ll}
\toprule
& & \multicolumn{2}{c}{$\mu=p+1$} & \multicolumn{2}{c}{$\mu>p+1$} & \multicolumn{2}{c}{No projection} \\
& $N$ & Error & Rate & Error & Rate & Error & Rate \\
\midrule
\multirow{4}{*}{$p=2$}
& 12 & $4.03\times10^{-2}$ & ---  & $2.65\times10^{-2}$ & ---  & $2.20\times10^{-3}$ & ---  \\
& 24 & $9.20\times10^{-3}$ & 2.13 & $4.11\times10^{-3}$ & 2.69 & $2.20\times10^{-3}$ & 2.90 \\
& 48 & $1.83\times10^{-3}$ & 2.33 & $6.72\times10^{-4}$ & 2.61 & $3.06\times10^{-4}$ & 2.84 \\
& 96 & $3.43\times10^{-4}$ & 2.41 & $1.11\times10^{-4}$ & 2.60 & $4.21\times10^{-5}$ & 2.86 \\
\midrule
\multirow{4}{*}{$p=3$}
& 12 & $8.03\times10^{-3}$ & ---  & $3.54\times10^{-3}$ & ---  & $2.46\times10^{-3}$ & ---  \\
& 24 & $8.01\times10^{-4}$ & 3.32 & $3.73\times10^{-4}$ & 3.25 & $2.08\times10^{-4}$ & 3.57 \\
& 48 & $7.66\times10^{-5}$ & 3.39 & $3.47\times10^{-5}$ & 3.43 & $1.54\times10^{-5}$ & 3.76 \\
& 96 & $6.67\times10^{-6}$ & 3.52 & $3.14\times10^{-6}$ & 3.47 & $1.08\times10^{-6}$ & 3.83 \\
\midrule
\multirow{4}{*}{$p=4$}
& 12 & $1.37\times10^{-3}$ & ---  & $8.02\times10^{-4}$ & ---  & $6.40\times10^{-4}$ & ---  \\
& 24 & $9.46\times10^{-5}$ & 3.85 & $3.67\times10^{-5}$ & 4.45 & $2.51\times10^{-5}$ & 4.67 \\
& 48 & $4.84\times10^{-6}$ & 4.29 & $1.60\times10^{-6}$ & 4.52 & $9.60\times10^{-7}$ & 4.71 \\
& 96 & $2.27\times10^{-7}$ & 4.42 & $7.05\times10^{-8}$ & 4.50 & $3.43\times10^{-8}$ & 4.81 \\
\midrule
\multirow{4}{*}{$p=5$}
& 12 & $3.78\times10^{-4}$ & ---  & $1.86\times10^{-4}$ & ---  & $1.54\times10^{-4}$ & ---  \\
& 24 & $1.19\times10^{-5}$ & 4.99 & $4.90\times10^{-6}$ & 5.25 & $3.52\times10^{-6}$ & 5.45 \\
& 48 & $3.15\times10^{-7}$ & 5.24 & $1.08\times10^{-7}$ & 5.50 & $6.77\times10^{-8}$ & 5.70 \\
& 96 & $7.06\times10^{-9}$ & 5.48 & $2.35\times10^{-9}$	& 5.53 & $1.23\times10^{-9}$ & 5.79 \\
\bottomrule
\end{tabular}
\end{table}

\begin{figure}
\includegraphics{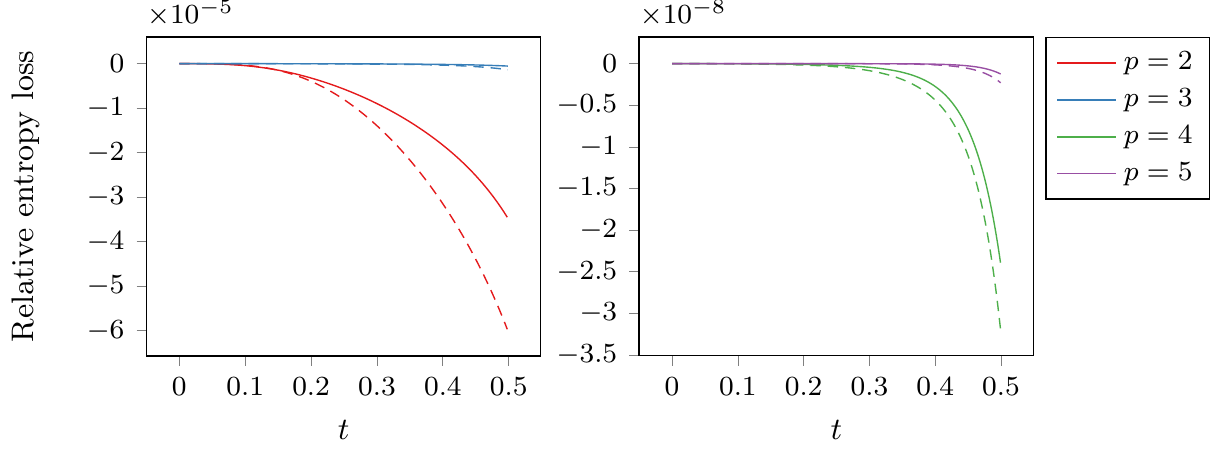}
\caption{Relative entropy loss for the 2D Burgers' equation, with $U=\half u^2$,
         for $N=12$ and $p=2,3,4,5$. Line-DG shown with solid lines, DG-SEM shown with
         dashed lines.}
\label{fig:burgers2dentropy}
\end{figure}

As a final investigation into the entropy conservation and stability properties
of the method, we consider using the entropy conservative numerical flux function
at element interfaces by defining, as in \cite{Gassner2013},
\begin{equation} \label{eq:f-econ}
   \widehat{\bm f_S}(u_L, u_R) \cdot \bm n = \frac{1}{6} \left(
      u_L^2 + u_L u_R + u_R^2 \right) \bm\beta \cdot \bm n.
\end{equation}
This is not a physically relevant choice of numerical flux function for Burgers'
equation because it fails to dissipate entropy across discontinuities. However,
it is useful to test the numerical properties of the scheme. We now solve the
same problem as above, but integrate in time until $t = 1.25$, at which point
the solution has developed discontinuities, and thus the entropy of the true
solution is less than the entropy of the initial condition. In Figure
\ref{fig:burgers2dentropycons} we compare the relative entropy loss of the
Line-DG method using both (entropy stable) exact Riemann solver and the entropy
conservative numerical flux function \eqref{eq:f-econ} at element interfaces.
These results confirm that using the entropy conservative numerical flux
function gives unchanged entropy (to machine precision) throughout the duration
of the simulation, numerically confirming the results shown in Proposition
\ref{prop:entropy-stability}.

\begin{figure}
\centering
\includegraphics{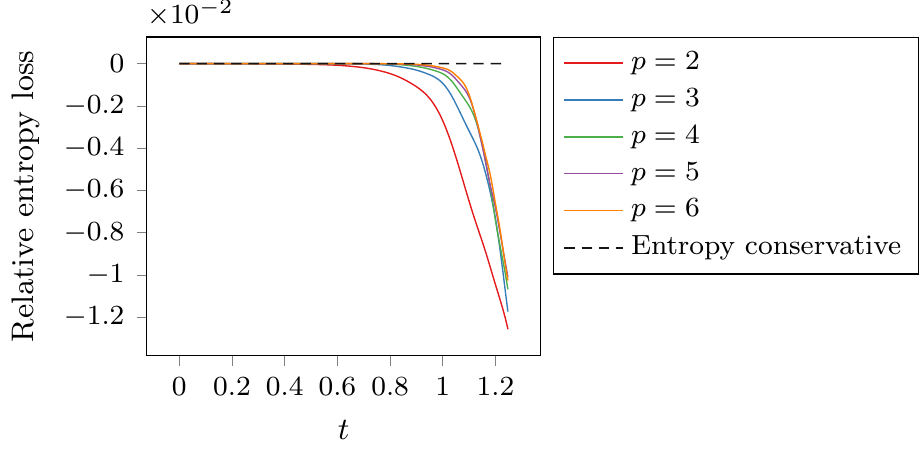}
\caption{Relative entropy loss for the 2D Burgers' equation at $t=1.25$,
         for $N=12$ and $p=2,3,4,5,6$ using the Line-DG method. Results
         obtained using the exact Riemann solver are shown in solid lines,
         results using the entropy conservative flux are shown in dashed
         lines.}
\label{fig:burgers2dentropycons}
\end{figure}

\subsection{2D isentropic vortex}

For this test problem, we study the accuracy of the Line-DG
method applied to the isentropic Euler vortex test case \cite{Wang2013}. This
problem consists of an isentropic vortex that is advected with the freestream
velocity, and is often used as a smooth benchmark problem
\cite{Pazner2017,Zahr2013}. The spatial domain is taken to be
$[0,20]\times[0,15]$. The vortex is initially centered at $(x_0, y_0) = (5,5)$,
and is advected at an angle of $\theta$. The exact solution at position and time
$(x,y,t)$ is given by
\begin{equation}
\begin{aligned}
    \rho(x,y,t) &= \rho_\infty \left( 1 -
        \frac{\epsilon^2 (\gamma - 1)M^2_\infty}{8\pi^2} \exp(f(x,y,t))
        \right)^{\frac{1}{\gamma-1}}, \\
    w_1(x,y,t) &= w_\infty \left( \cos(\theta) -
        \frac{\epsilon ((y-y_0) - \overline{w_2} t)}{2\pi r_c}
        \exp\left( \frac{f(x,y,t)}{2} \right) \right),\\
    w_2(x,y,t) &= w_\infty \left( \sin(\theta) - \frac{\epsilon ((x-x_0) -
        \overline{w_1} t)}{2\pi r_c}
        \exp\left( \frac{f(x,y,t)}{2} \right) \right),\\
    p(x,y,t) &= p_\infty \left( 1 -
        \frac{\epsilon^2 (\gamma - 1)M^2_\infty}{8\pi^2} \exp(f(x,y,t))
        \right)^{\frac{\gamma}{\gamma-1}},
\end{aligned}
\end{equation}
where $f(x,y,t) = (1 - ((x-x_0) - \overline{w_1}t)^2 - ((y-y_0) -
\overline{w_2}t)^2)/r_c^2$, $M_\infty$ is the freestream Mach number, and
$w_\infty, \rho_\infty,$ and $p_\infty$ are the freestream velocity magnitude,
density, and pressure, respectively. The freestream velocity is given by
$(\overline{w_1}, \overline{w_2}) = w_\infty (\cos(\theta), \sin(\theta))$. The
strength of the vortex is given by $\epsilon$, and its size by $r_c$. We choose
the parameters to be $M_\infty = 0.5$, $w_\infty = 1$, $\theta = \arctan(1/2)$,
$\epsilon = 5/(2\pi)$, and $r_c = 1.5$. We integrate the equations until $t =
5$.

We perform a convergence study to investigate the effects of the projection
operation described in Section \ref{sec:projection} on the accuracy of the
method. Additionally, for comparison we consider a fully-integrated standard DG
method. The local Lax-Friedrichs numerical flux function was used for all
methods. The $L^\infty$ error at the final time is shown in Figure
\ref{fig:everrors}. We observe that the Line-DG method without the projection
operation has accuracy that is almost identical to that of the standard,
fully-integrated DG method for this test problem. This finding is consistent
with the results shown in \cite{Persson2013}. However, when the projection
operation is performed in order to ensure discrete entropy stability, we observe
a larger error by approximately a constant factor. It is interesting to note
that for this test problem, we do not observe the sub-optimal order of accuracy
seen in previous test cases. This is possibly due to the translational nature of
the true solution.

\begin{figure}
  \includegraphics{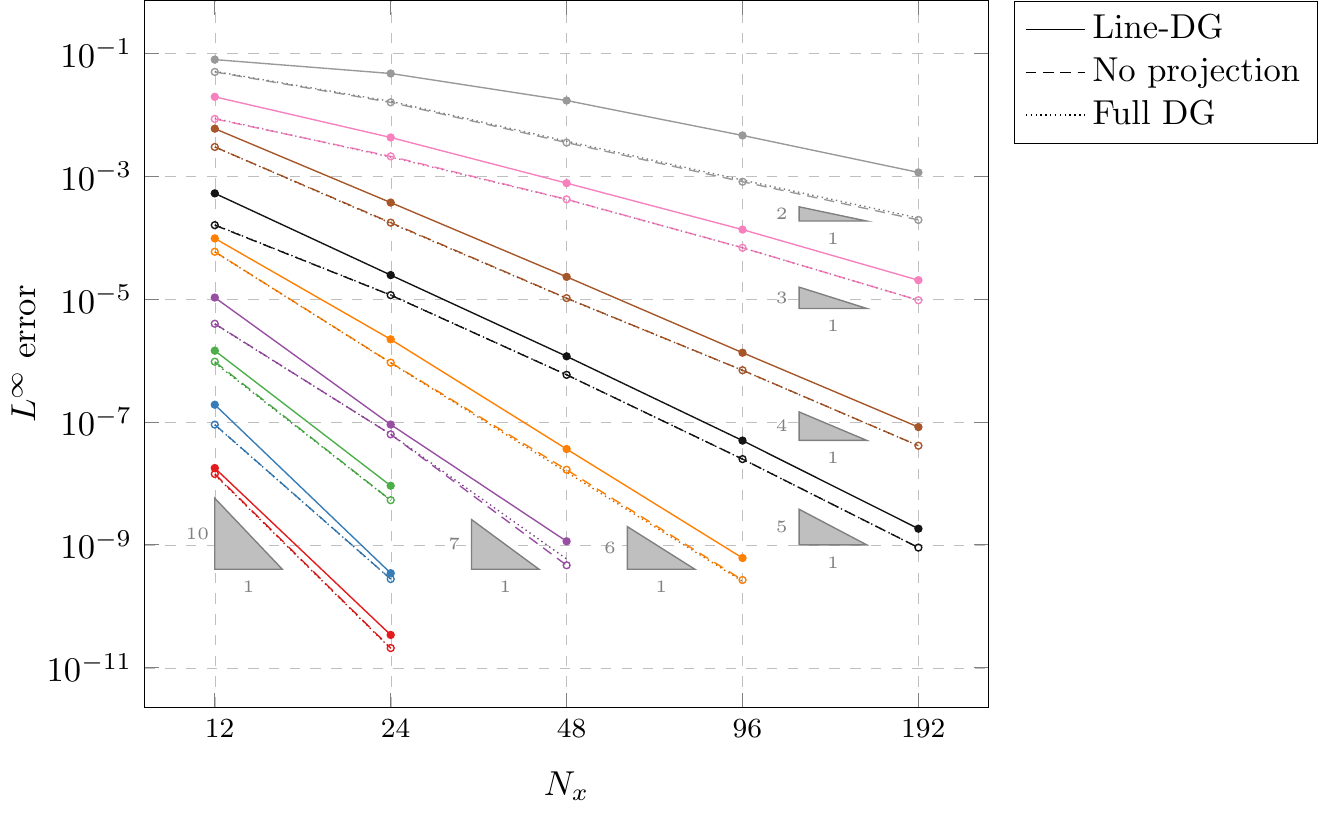}
  \caption{$L^\infty$ error for isentropic Euler vortex problem with polynomial
           degrees $p=1,2,\ldots,9$. Optimal order of accuracy is observed
           for all cases. Solid lines indicate the entropy-stable Line-DG
           method. Dashed lines indicate Line-DG without the projection
           operation. Dotted lines indicate fully-integrated standard DG.}
  \label{fig:everrors}
\end{figure}

\subsection{2D supersonic flow in a duct}

We consider the inviscid supersonic flow in a duct with a smooth bump. The
duct has dimensions $3 \times 1$ and the $y$-coordinate of the bottom boundary
is given by
\begin{equation}
  y(x) = \begin{cases}
    \frac{H}{2}\left(\cos(\pi(1+2(x-3/4)) + 1\right), &\qquad 3/4 < x < 5/4 \\
    H, &\qquad 5/4 \leq x \leq 7/4 \\
    \frac{H}{2}\left(\cos(2\pi(x-7/4))+1\right), &\qquad 7/4 < x < 9/4 \\
    0, &\qquad\text{otherwise.}
  \end{cases}
\end{equation}
where the height of the bump is given by $H = 0.04$. The inflow density is $\rho
= 1$, and the inflow velocity is $\bm w = (1, 0)$. We set the Mach number to $M
= 1.4$ as in \cite{Fernandez2018}. Slip wall conditions are enforced on the top
and bottom boundaries. The curved boundary is represented using isoparametric
elements. We use 675 elements with degree $p=4$ polynomials. We do not use any
shock capturing techniques or apply any limiters to the solution. This test case
is intended to assess the robustness of the method for under-resolved high Mach
number flow.

We compute the steady solution to this problem using pseudo-time integration. We
compare the solutions obtained using the $\mu = p+1$ DG-SEM method to the
Line-DG method with $\mu > p+1$. The steady-state pressure is shown in Figure
\ref{fig:duct}. Both solutions display fairly severe oscillations in the
vicinity of the shocks. Some of these features appear to be more prominent in
the solution obtained using the DG-SEM method. Despite these oscillations, the
method remains robust due to the entropy stability. Traditional DG-SEM, Line-DG,
or consistently-integrated standard DG methods are unstable for this problem
without the use of additional shock capturing techniques or limiters.

\begin{figure}
\includegraphics{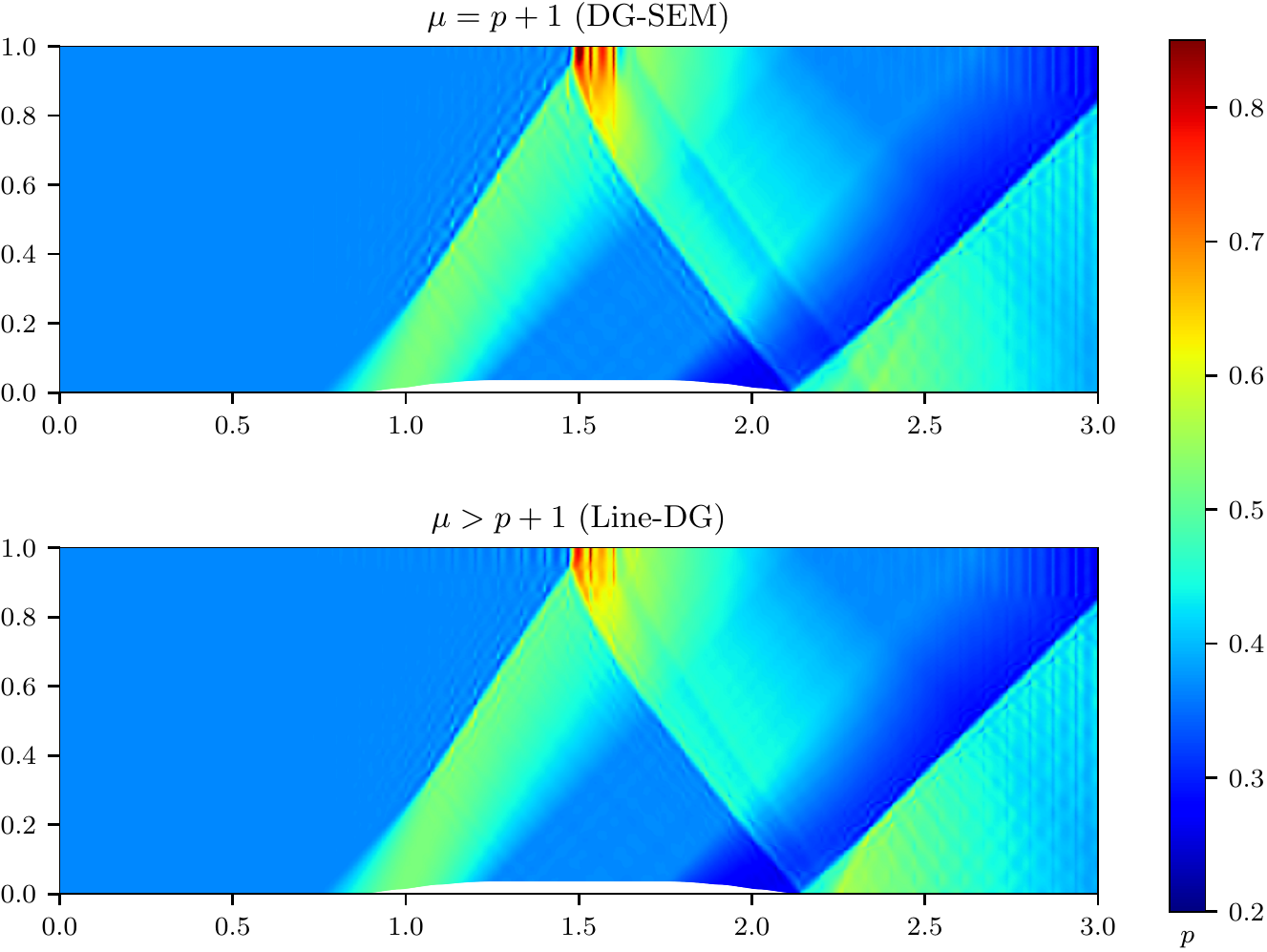}
\caption{Density of steady-state solution to supersonic flow over a bump.}
\label{fig:duct}
\end{figure}

We also use this test to verify the freestream preservation of the
method, which is widely known to be an important property \cite{Thomas1979}. We
initialize the solution to uniform flow and enforce freestream boundary
conditions at all boundaries. We then integrate in time and monitor the
deviation of the solution from the initial freestream conditions. We use $p=2$
polynomials and consider both the Line-DG and DG-SEM methods, and integrate
until a final time of $t=1$. Both methods resulted in only machine precision
deviation from freestream conditions. The $L^\infty$ deviation from the
freestream density was found to be about $4.4 \times 10^{-14}$ using the DG-SEM
method and about $4.9 \times 10^{-14}$ using the Line-DG method.

\subsection{3D inviscid Taylor-Green vortex}

For a final set of test cases, we consider the compressible, inviscid
Taylor-Green vortex (TGV) \cite{Taylor1937} at different Mach numbers. This
problem has been extensively studied for the incompressible case
\cite{Brachet1983}, as well as the nearly-incompressible case
\cite{Shu2005,Chapelier2012,Pazner2017b,Pazner2018}. The stability of DG
discretizations for the under-resolved simulation of the inviscid TGV has also
been studied in \cite{Winters2018,Moura2017}. The domain is taken to be the cube
$[-\pi,\pi]^3$, and periodic conditions are enforced on all boundaries. The
initial conditions are given by
\begin{equation}
\begin{aligned}
  \rho(x,y,z) &= \rho_0 \\
  w_1(x,y,z) &= w_0 \sin(x) \cos(y) \cos(z) \\
  w_2(x,y,z) &= -w_0 \cos(x) \sin(y) \cos(z) \\
  w_3(x,y,z) &= 0 \\
  p(x,y,z) &= p_0 + \rho_0 u_0^2\left(
    \cos(2x) + \cos(2y) \right)\left(\cos(2z) + 2\right)/16,
\end{aligned}
\end{equation}
where we take the parameters to be $w_0 = 1$, $\rho_0 = 1$, with Mach number
$M_0 = u_0/c_0$, where $c_0$ is the speed of sound computed in accordance with
the pressure $p_0$. The characteristic convective time is given by $t_{\rm
c}=1$, and we integrate until $t = 10 t_{\rm c}$. For the nearly incompressible
case, we choose $p_0 = 100$ which corresponds to a Mach number of $M_0 \approx
0.08$. We also consider a higher Mach number case defined by $M_0 = 0.7$.

We measure three quantities of interest. The first is the mean entropy, which is
guaranteed to be monotonically non-increasing by the method. The second is the
mean kinetic energy
\begin{equation}
  E_k(t) = \frac{1}{\rho_0 |\Omega|} \int_\Omega \tfrac{1}{2} \rho \bm w\cdot
  \bm w \, d\bm x.
\end{equation}
We can easily see that $E_k(0) = 1/8$. Since the kinetic energy is conserved for
the inviscid Taylor-Green vortex in the incompressible limit, for the low Mach
(nearly incompressible) case, we can use $E_k(t)$ as a measure of the numerical
dissipation introduced by the discretization. The third quantity of interest
considered is the mean enstrophy, defined by
\begin{equation}
  \E(t) = \frac{1}{\rho_0 |\Omega|} \int_\Omega \tfrac{1}{2} \rho
    \bm\omega\cdot\bm\omega \, d\bm{x},
\end{equation}
where $\bm\omega = \nabla \times \bm w$ is the vorticity. The enstrophy can be
used as a measure of the resolving power of the numerical discretization
\cite{Shu2005}. These integrals are discretized using a consistent quadrature
with $\mu$ points in each dimension.

We discretize the geometry using a $20 \times 20 \times 20$ Cartesian grid, and
we use degree $p=3$ and $p=5$ polynomials. For the higher Mach number case,
defined by $M=0.7$, the standard DG-SEM method without entropy stability is
unstable after about $t = 3.9 t_{\rm c}$. The Line-DG method without
entropy stability is unstable after about $t = 4.7 t_{\rm c}$. The
entropy-stable versions of both DG-SEM and Line-DG remain stable for the full
duration of the simulation. These stability issues were not
observed for the nearly incompressible case.

In Figure \ref{fig:tgquantities}, we show the normalized time evolution of the
quantities of interest for both test cases. We define the normalized mean
entropy by $\int_\Omega \left( U(\bm x, t) - U(\bm x, 0) \right) \, d\bm x /
\int_\Omega U(\bm x, 0) \, d\bm x$, and similarly for the normalized mean
kinetic energy and enstrophy. For the low Mach case with $p=3$ polynomials, we
notice that the Line-DG method dissipates less entropy and kinetic energy than
the equal-order DG-SEM method. In fact, the dissipation of these two quantities
is roughly equivalent to the DG-SEM method with polynomial degree $p=5$. For
both the Line-DG method and DG-SEM method with $p=3$, the peak enstrophy is
under-predicted. For the $M_0=0.7$ case, the Line-DG and DG-SEM methods result
in comparable results for all three quantities considered, however for the $p=5$
case, the DG-SEM method gives rise to less enstrophy growth. As discussed in
\cite{Shu2005}, some caution is required when using these mean quantities to
assess the quality of the numerical solutions, in particular once the solution
has become under-resolved.

This test case demonstrates both the increased robustness of the entropy-stable
Line-DG when compared with standard DG methods, and its low dissipation when
compared with the equal-order entropy-stable DG-SEM method.

\begin{figure}
\includegraphics{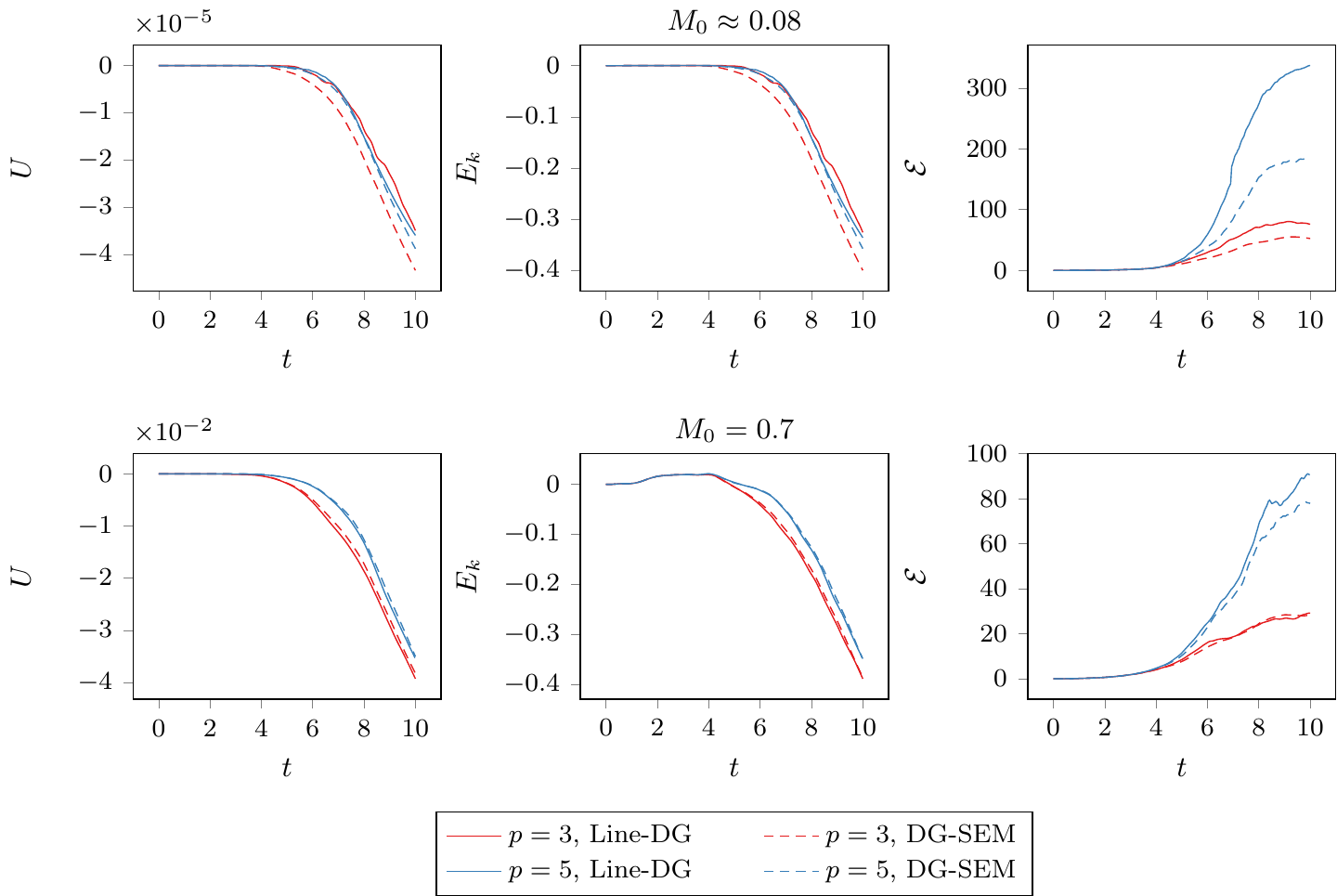}
\caption{Normalized time evolution of $U$ (entropy), $E_k$ (kinetic energy), and
         $\mathcal{E}$ (enstrophy) over time, for the 3D inviscid Taylor-Green
         test case, at $M_0\approx0.08$ and $M_0=0.7$.}
\label{fig:tgquantities}
\end{figure}

\section{Conclusions} \label{sec:conclusion}

In this paper, we have constructed a discretely entropy-stable line-based
discontinuous Galerkin method. We modify the Line-DG method of
\cite{Persson2012,Persson2013} using a flux differencing technique in order to
obtain discrete entropy stability, compatible with the quadrature rule used in
the discretization. This line-based method is composed of one-dimensional
operations performed along lines or curves of nodes within tensor-product
elements, resulting in fewer flux evaluations, and requiring only
one-dimensional interpolation and integration operations. This method is closely
related to the entropy-stable DG-SEM method, described in
\cite{Chen2017,Carpenter2014,Fisher2013}, and to the entropy-stable full DG
method developed in \cite{Chan2018}. When compared with the equal-order
entropy-stable DG-SEM method on a range of test cases, the Line-DG method
results in smaller errors and less numerical dissipation.

The main feature of this entropy-stable method is its increased robustness in
the presence of shocks or under-resolved features. This robustness has been
demonstrated on a range of problems for which standard DG-type methods are
unstable without the use of additional limiting or shock-capturing techniques.
For problems with strong shocks, the entropy-stable Line-DG method can
demonstrate spurious oscillations despite remaining stable. For problems of this
type, artificial viscosity or limiters may be used to reduce the oscillations
and increase solution quality.

\section{Acknowledgements}

This work was supported by the National Aeronautics and Space Administration
(NASA) under grant number NNX16AP15A, by the Director, Office of Science, Office
of Advanced Scientific Computing Research, U.S. Department of Energy under
Contract No. DE-AC02-05CH11231 and by the AFOSR Computational Mathematics
program under grant number FA9550-15-1-0010. Lawrence Livermore National
Laboratory is operated by Lawrence Livermore National Security, LLC, for the
U.S. Department of Energy, National Nuclear Security Administration under
Contract DE-AC52-07NA27344 (LLNL-JRNL-767379).

\bibliographystyle{spmpsci}
\bibliography{refs2}

\end{document}